\documentclass[12pt,leqno,draft]{article}
\usepackage{amsfonts}
\pagestyle{plain}
\usepackage{amsmath, amsthm, amsfonts, amssymb, color}
\usepackage{mathrsfs}
\usepackage{color}
\setlength{\topmargin}{0cm} \setlength{\oddsidemargin}{0cm}
\setlength{\evensidemargin}{0cm} \setlength{\textwidth}{16truecm}
\setlength{\textheight}{21truecm}

\newtheorem{thm}{Theorem}[section]

\newtheorem{lem}[thm]{Lemma}

\theoremstyle{definition}

\newcommand{\scr}[1]{\mathscr #1}
\definecolor{wco}{rgb}{0.5,0.2,0.3}

\numberwithin{equation}{section} \theoremstyle{remark}

\newcommand{\ua}{\uparrow}

\title{{\bf  Path-Distribution Dependent SDEs with Singular Coefficients}\footnote{Supported in
 part by  NNSFC (11801406).} }
\author{
{\bf   Xing Huang $^{a)}$}\\
\footnotesize{ a)Center for Applied Mathematics, Tianjin
University, Tianjin 300072, China}\\
\footnotesize{  xinghuang@tju.edu.cn}}
\begin{document}
\allowdisplaybreaks
\def\R{\mathbb R}  \def\ff{\frac} \def\ss{\sqrt} \def\B{\mathbf
B} \def\W{\mathbb W}
\def\N{\mathbb N} \def\kk{\kappa} \def\m{{\bf m}}
\def\ee{\varepsilon}\def\ddd{D^*}
\def\dd{\delta} \def\DD{\Delta} \def\vv{\varepsilon} \def\rr{\rho}
\def\<{\langle} \def\>{\rangle} \def\GG{\Gamma} \def\gg{\gamma}
  \def\nn{\nabla} \def\pp{\partial} \def\E{\mathbb E}
\def\d{\text{\rm{d}}} \def\bb{\beta} \def\aa{\alpha} \def\D{\scr D}
  \def\si{\sigma} \def\ess{\text{\rm{ess}}}
\def\beg{\begin} \def\beq{\begin{equation}}  \def\F{\scr F}
\def\Ric{\text{\rm{Ric}}} \def\Hess{\text{\rm{Hess}}}
\def\e{\text{\rm{e}}} \def\ua{\underline a} \def\OO{\Omega}  \def\oo{\omega}
 \def\tt{\tilde} \def\Ric{\text{\rm{Ric}}}
\def\cut{\text{\rm{cut}}} \def\P{\mathbb P} \def\ifn{I_n(f^{\bigotimes n})}
\def\C{\scr C}      \def\aaa{\mathbf{r}}     \def\r{r}
\def\gap{\text{\rm{gap}}} \def\prr{\pi_{{\bf m},\varrho}}  \def\r{\mathbf r}
\def\Z{\mathbb Z} \def\vrr{\varrho} \def\ll{\lambda}
\def\L{\scr L}\def\Tt{\tt} \def\TT{\tt}\def\II{\mathbb I}
\def\i{{\rm in}}\def\Sect{{\rm Sect}}  \def\H{\mathbb H}
\def\M{\scr M}\def\Q{\mathbb Q} \def\texto{\text{o}} \def\LL{\Lambda}
\def\Rank{{\rm Rank}} \def\B{\scr B} \def\i{{\rm i}} \def\HR{\hat{\R}^d}
\def\to{\rightarrow}\def\l{\ell}\def\iint{\int}
\def\EE{\scr E}\def\Cut{{\rm Cut}}
\def\A{\scr A} \def\Lip{{\rm Lip}}
\def\BB{\scr B}\def\Ent{{\rm Ent}}\def\L{\scr L}
\def\R{\mathbb R}  \def\ff{\frac} \def\ss{\sqrt} \def\B{\mathbf
B}
\def\N{\mathbb N} \def\kk{\kappa} \def\m{{\bf m}}
\def\dd{\delta} \def\DD{\Delta} \def\vv{\varepsilon} \def\rr{\rho}
\def\<{\langle} \def\>{\rangle} \def\GG{\Gamma} \def\gg{\gamma}
  \def\nn{\nabla} \def\pp{\partial} \def\E{\mathbb E}
\def\d{\text{\rm{d}}} \def\bb{\beta} \def\aa{\alpha} \def\D{\scr D}
  \def\si{\sigma} \def\ess{\text{\rm{ess}}}
\def\beg{\begin} \def\beq{\begin{equation}}  \def\F{\scr F}
\def\Ric{\text{\rm{Ric}}} \def\Hess{\text{\rm{Hess}}}
\def\e{\text{\rm{e}}} \def\ua{\underline a} \def\OO{\Omega}  \def\oo{\omega}
 \def\tt{\tilde} \def\Ric{\text{\rm{Ric}}}
\def\cut{\text{\rm{cut}}} \def\P{\mathbb P} \def\ifn{I_n(f^{\bigotimes n})}
\def\C{\scr C}      \def\aaa{\mathbf{r}}     \def\r{r}
\def\gap{\text{\rm{gap}}} \def\prr{\pi_{{\bf m},\varrho}}  \def\r{\mathbf r}
\def\Z{\mathbb Z} \def\vrr{\varrho} \def\ll{\lambda}
\def\L{\scr L}\def\Tt{\tt} \def\TT{\tt}\def\II{\mathbb I}
\def\i{{\rm in}}\def\Sect{{\rm Sect}}  \def\H{\mathbb H}
\def\M{\scr M}\def\Q{\mathbb Q} \def\texto{\text{o}} \def\LL{\Lambda}
\def\Rank{{\rm Rank}} \def\B{\scr B} \def\i{{\rm i}} \def\HR{\hat{\R}^d}
\def\to{\rightarrow}\def\l{\ell}
\def\8{\infty}\def\I{1}\def\U{\scr U}
\maketitle

\begin{abstract} In this paper, existence and uniqueness are proved for path-dependent McKean-Vlasov type SDEs with integrability conditions. Gradient estimates and Harnack type inequalities are derived in the case that the coefficients are Dini continuous in the space variable. These generalize the corresponding results derived for classical functional SDEs with singular coefficients.

\end{abstract} \noindent
 AMS subject Classification:\  60H1075, 60G44.   \\
\noindent
 Keywords: Path-Distribution dependent SDEs, Krylov's estimate, Zvonkin's transform, Harnack inequality.
 \vskip 2cm

\section{Introduction}
The distribution dependent SDEs  can be used to  characterize  nonlinear Fokker-Planck equations, see \cite{CD,CR,MV,SZ} and references within for McKean-Vlasov type SDEs, and \cite{ CA,DV1,DV2} and references within for Landau type equations, see also for the path-distribution dependent SDEs with regular conditions.

Recently, \cite{HW18} studied the existence and uniqueness of distribution dependent SDEs with singular coefficients. The Harnack, shift Harnack inequalities and gradient estimate are also investigated in \cite{HW18}. \cite{RZ} also obtains the existence and uniqueness, estimate of heat kernel for singular distribution dependent SDEs.
 For more results on distribution independent SDEs with singular coefficients, one can see \cite{GM,KR,Z2,W16} and references therein, where the Zvonkin transform in \cite{AZ} plays an important role.

The purpose of this paper is to extend results in \cite{HW18} to path-distribution dependent SDEs with singular drift. Firstly, due to the distribution dependence, the Girsanov transform, which is a useful tool to prove the existence of weak solution for the classical SDEs is unavailable. Thus, compared to the classical SDEs with singular drift, we will pay more attention in the proof of existence of weak solution. In other words, we will apply an approximation technique similar to that in \cite{HW18,RZ} to obtain weak existence. However, the path-distribution dependent drift will add new difficulty, see the proof of Theorem \ref{T1.1} (1) below. With the weak existence in hand, if using a fixed distribution $\mu_t$ to replace the law of solution $\L_{X_t}$, the SDE \eqref{E10} has strong uniqueness,  then a strong solution for SDE \eqref{E10} can be obtained. To prove the strong uniqueness, we will again use the technique in \cite{HW18}, i.e. we first identify the distributions of given two solutions, so that these  solutions  solve the common reduced classical SDE, and thus, the pathwise uniqueness follows from existing argument developed for the classical SDEs. The essential difficulty lies in identifying the distributions of two solutions of \eqref{E10}. Finally, gradient estimates and Harnack type inequalities can be proved by Zvonkin's transform as in the regular situation considered in \cite{FYW1}.

Fix a constant $r> 0$, let $\C= C([-r,0];\mathbb{R}^d)$ be equipped with the uniform norm $\|\xi\|_{\C} =:\sup_{s\in[-r,0]} |\xi(s)|$. For any $f\in C([-r,\infty);\mathbb{R}^d)$, $t\geq 0$, define $f_t \in \C$ as $f_t(s)=f(t+s), s\in [-r,0]$, which is called the segment process.

Let $\scr P$ be the set of all probability measures on $\C$. Consider the following path-distribution dependent SDE on $\R^d$:
\beq\label{E10} \d X(t)= B(t,X_t, \L_{X_t})\d t+ b(t,X(t),\L_{X_t})+\si(t,X(t), \L_{X_t})\d W(t),\end{equation}
where $W(t)$ is the $d$-dimensional Brownian motion on a complete filtration probability space $(\OO,\{\F_t\}_{t\ge 0},\P)$, $\L_{X_t}$ is the law of $X_t$, and
$$b: \R_+\times\R^d\times \scr P\to \R^d,\ \ B: \R_+\times\C\times \scr P\to \R^d,\ \ \si: \R_+\times\R^d\times \scr P\to \R^d\otimes\R^d$$ are measurable. We use $\L_\xi|\tt \P$ to denote the law of a random variable $\xi$ under the probability $\tt\P$.

Throughout the paper, we use $\|\cdot\|_\infty$ to denote the uniform norm.
The remainder of the paper is organized as follows. In Section 2 we summarize the main results of the paper. To prove these results,   some preparations are addressed in Section~3, including a new Krylov's estimate, one lemma on convergence of stochastic processes, and a result on the existence of strong solutions for distribution dependent SDEs. Finally,  the main results are proved in Sections 4 and 5.


\section{Main Results}
Let $\theta\in [1,\infty)$, we will consider the SDE \eqref{E10} with initial distributions in the class
$$\scr P_\theta := \big\{\mu\in \scr P: \mu(\|\cdot\|_{\C}^\theta)<\infty\big\}.$$    It is well known that
$\scr P_\theta$ is a Polish space under the Wasserstein distance
$$\W_\theta(\mu,\nu):= \inf_{\pi\in \mathbf{C}(\mu,\nu)} \bigg(\int_{\C\times\C} \|\xi-\eta\|_{\C}^\theta \pi(\d \xi,\d \eta)\bigg)^{\ff 1 {\theta}},\ \ \mu,\nu\in \scr P_{\theta},$$ where $\mathbf{C}(\mu,\nu)$ is the set of all couplings of $\mu$ and $\nu$.  Moreover,   the topology induced by $\W_\theta$ on $\scr P_\theta$ coincides with the weak topology.

In the following three subsections,  we state our main results   on the existence, uniqueness and Harnack type inequalities respectively  for the distribution dependent SDE \eqref{E10}.

\subsection{Existence and Uniqueness}
We will fix a constant $T>0$, and only consider solutions of \eqref{E10} up to time $T$.
 For a measurable function $f$ defined on $[0,T]\times\mathbb{R}^d$, let
$$\|f\|_{L^q_p(s,t)}=\left(\int_s^t\left(\int_{\mathbb{R}^d}|f(r,x)|^p\d x\right)^{\frac{q}{p}}\d r\right)^{\frac{1}{q}}, \ \ p,q\ge 1, 0\le s\le t\le T. $$ When $s=0$, we simply denote   $\|f\|_{L^q_p(0,t)}=\|f\|_{L^q_p(t)}$. A key step in the study of singular SDEs is to establish Krylov type estimate (see for instance \cite{KR}).
For later use we introduce the following class of number pairs $(p,q)$:
   $$\scr K:=\Big\{(p,q)\in (1,\infty)\times(1,\infty):\   \ff d p +\ff 2 q<2\Big\}.$$
To construct a weak solution of \eqref{E10} by using approximation argument as in \cite{GM, HW18, MV, RZ}, we need the following conditions.

\beg{enumerate} \item[$(H^\theta)$] The following assumptions hold for some $\theta\ge 1$.
\item[$\ (1)$]  For $ \mu\in \scr P_\theta$ and $\mu^n\to \mu$ in $\scr P_\theta$,
$$  \lim_{n\to\infty}  \big\{ |b(t,x,\mu^n)-b(t,x,\mu)|+ \|\sigma(t,x,\mu^n)- \sigma(t,x,\mu)\| \big\} =0,\ \ \text{a.e.}\ \  (t,x)\in [0,T]\times\R^d.  $$
\item[$\ (2)$]  There exist $K>1$,   $(p,q)\in \scr K$ and  nonnegative $F\in  L_p^q(T)$
such that
 $$ |b(t,x,\mu)|^2\le F(t,x)+K,\ \ K^{-1} I \le (\sigma\sigma^\ast)(t,x,\mu)\le K I$$
 for all $(t,x,\mu)\in [0,T]\times \R^d\times \scr P_\theta.$
 \item[$\ (3)$] $B$ is bounded and for any $t\in[0,T], \xi\in\C$, $B(t,\xi,\cdot)$ is continuous on $\scr P_\theta$. Moreover, there exists a constant $L_0>0$ such that
 \begin{equation}\label{Lipx}
 |B(t,\xi,\mu)-B(t,\bar{\xi},\mu)|\leq L_0\|\xi-\bar{\xi}\|_{\C}, \ \ t\in[0,T], \xi,\bar{\xi}\in\C,\mu\in\scr P_\theta.
 \end{equation}
\end{enumerate}
 Recall that a continuous function $f$ on $\mathbb{R}^d$ is called weakly differentiable, if there exists (hence unique) $h\in L^1_{loc}(\mathbb{R}^d)$ such that
$$\int_{\mathbb{R}^d}(f\Delta g)(x)\d x=-\int_{\mathbb{R}^d}\langle h,\nabla g\rangle (x)\d x, \ \ g\in C_0^\infty(\mathbb{R}^d).$$
In this case, we write $h=\nabla f$ and call it the weak gradient of $f$.

 The main result in this part is the following.

\begin{thm}\label{T1.1} Assume $(H^\theta)$ for some constant $\theta\ge 1$.   Let $X_0$ be an $\F_0$-measurable   random variable on $\C$ with $\mu_0:=\L_{X_0}\in \scr P_\theta$. Then the following assertions hold.
\beg{enumerate} \item[$(1)$] The SDE $\eqref{E10}$ has a weak solution with initial distribution $\mu_0$ satisfying $ \L_{X_\cdot} \in C([0,T];\scr P_\theta)$.
\item[$(2)$] If $\sigma$ is uniformly continuous in $x\in\mathbb{R}^d$ uniformly with respect to $(t,\mu)\in[0,T]\times\scr P_{\theta},$ and
 for any $\mu(\cdot)\in C([0,T]; \scr P_\theta)$, $b^\mu(t,x):= b(t, x, \mu_t)$ and $\si^\mu(t,x):= \si(t,x,\mu_t)$ satisfy
$| b^\mu|^2+\|\nn \si^\mu\|^2 \in L_p^q(T)$ for some $(p,q)\in \scr K$, where $\nn$ is the weak gradient in the space variable $x\in \R^d$, then the SDE \eqref{E10} has a strong solution satisfying $\L_{X_\cdot}\in C([0,T];\scr P_\theta)$.
\item[$(3)$] If, in addition to the condition in $(2)$,  there exists a  constant  $L\,>0$ such that
\beq\label{LIP} \|\si(t,x,\mu)-\si(t,x,\nu)\|+ |b(t,x,\mu)-b(t,x,\nu)|\le L\, \W_\theta(\mu,\nu)\end{equation}
and
\beq\label{LIP'} |B(t,\xi,\mu)-B(t,\xi,\nu)|\le L\W_\theta(\mu,\nu)\end{equation}
holds for all $ \mu,\nu\in \scr P_\theta$ and $ (t,x)\in [0,T]\times \R^d, \xi\in\C,$  then  the strong solution is unique.
\end{enumerate}
\end{thm}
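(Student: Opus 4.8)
The plan is to prove Theorem \ref{T1.1} in three stages corresponding to its three assertions, with the bulk of the novelty concentrated in parts (1) and (3).

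\medskip

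\textbf{Part (1): Weak existence.} I would first build solutions to the decoupled problem. Fix $\mu(\cdot)\in C([0,T];\scr P_\theta)$; under $(H^\theta)$ the coefficients $b^\mu(t,x)=b(t,x,\mu_t)$, $B^\mu(t,\xi)=B(t,\xi,\mu_t)$, $\si^\mu(t,x)=\si(t,x,\mu_t)$ satisfy a non-degeneracy bound, the $L^q_p$ integrability of $|b^\mu|^2$, boundedness of $B^\mu$ and its Lipschitz property in $\xi$, so by the classical theory of singular functional SDEs (Krylov estimate plus a martingale-problem / tightness argument) there is a weak solution $X^\mu$ with $\L_{X^\mu_\cdot}\in C([0,T];\scr P_\theta)$; the $\theta$-moment bound is uniform in $\mu$ by Gronwall using the boundedness of $B$ and the $L^q_p$ bound on $|b^\mu|$, and $(p,q)\in\scr K$ gives the needed Krylov estimate. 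This defines a map $\Phi:\mu(\cdot)\mapsto \L_{X^\mu_\cdot}$ on the (convex, closed) set $\scr C_{R}:=\{\mu(\cdot)\in C([0,T];\scr P_\theta): \sup_t\mu_t(\|\cdot\|_\C^\theta)\le R, \mu_0=\L_{X_0}\}$ for $R$ large, and a fixed point of $\Phi$ gives the desired weak solution. To get a fixed point I would use a Schauder-type argument: equi-tightness of $\{\L_{X^\mu_\cdot}: \mu\in\scr C_R\}$ in $C([0,T];\scr P_\theta)$ via Kolmogorov's tightness criterion on the SDE increments (again using Krylov estimates to control the singular drift term), together with continuity of $\Phi$. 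The \emph{main obstacle} here — flagged by the author as ``new difficulty'' — is continuity of $\Phi$ when the drift $B$ depends on the \emph{path} $X_t$ and only continuously (not Lipschitz) on the measure: if $\mu^n(\cdot)\to\mu(\cdot)$ then $B(t,\xi,\mu^n_t)\to B(t,\xi,\mu_t)$ and $b(t,x,\mu^n_t)\to b(t,x,\mu_t)$ only a.e. in $(t,x)$ by $(H^\theta)(1)$ and $(3)$; passing to the limit in the martingale problem requires combining this a.e.\ convergence with a uniform Krylov estimate (to upgrade a.e.\ convergence of $b$ to convergence in the relevant $L^q_p$-in-law sense along the processes) and the uniform continuity/boundedness of $B$ together with tightness of the segment processes to handle the path dependence. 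I would isolate this as a lemma.

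\medskip

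\textbf{Part (2): Strong existence.} Given a weak solution $X$ from (1), set $\mu_t:=\L_{X_t}\in C([0,T];\scr P_\theta)$ and freeze it. Then $X$ is a weak solution of the classical functional SDE with coefficients $B^\mu,b^\mu,\si^\mu$, which under the extra hypotheses of (2) (uniform continuity of $\si$ in $x$ and $|b^\mu|^2+\|\nn\si^\mu\|^2\in L^q_p(T)$) satisfies pathwise uniqueness by the known Zvonkin-transform results for singular functional SDEs quoted in the introduction (e.g.\ via \cite{W16,Z2} type arguments, or the ``existence of strong solutions for distribution dependent SDEs'' lemma promised in Section 3). Weak existence plus pathwise uniqueness gives, by Yamada–Watanabe, a strong solution to the decoupled equation; since its law is exactly $\mu_\cdot$ it is a strong solution of \eqref{E10}. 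This part is essentially a citation-assembly step.

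\medskip

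\textbf{Part (3): Uniqueness.} Let $X,\tilde X$ be two solutions with the same initial law, $\mu_t=\L_{X_t}$, $\nu_t=\L_{\tilde X_t}$. The strategy (following \cite{HW18}) is: first show $\mu_t=\nu_t$ for all $t$, and then invoke pathwise uniqueness of the now-common decoupled equation from part (2) to conclude $X=\tilde X$ a.s. To identify the laws, I would derive a closed Gronwall inequality for $\W_\theta(\mu_t,\nu_t)^\theta$ (or for $\sup_{s\le t}\W_\theta(\mu_s,\nu_s)$). Using \eqref{LIP}, \eqref{LIP'} the difference of coefficients is bounded by $L\W_\theta(\mu_s,\nu_s)$; the singular part of the drift $b$ must be handled not by a naive Lipschitz estimate but via Zvonkin's transform — apply the transform $\Theta_t$ associated to the PDE with the singular coefficient $b^\mu$ (or $b^\nu$) to turn $X,\tilde X$ into processes with Lipschitz-regularized drift, estimate $\E\|\Theta_t(X_t)-\Theta_t(\tilde X_t)\|_\C^\theta$ by Gronwall (the transform's gradient bounds, valid since the coefficients are at least in $L^q_p$, control the error terms, and the $\W_\theta$-Lipschitz dependence feeds the inhomogeneous term), and then transfer back using bi-Lipschitz-ness of $\Theta_t$. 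This yields $\sup_{s\le t}\W_\theta(\mu_s,\nu_s)^\theta \le C\int_0^t \sup_{u\le s}\W_\theta(\mu_u,\nu_u)^\theta\,\d s$, hence $\mu_\cdot=\nu_\cdot$. The \emph{hardest point} is making the Zvonkin transform interact cleanly with the path dependence of $B$ and with the comparison of two \emph{different} frozen equations (the transform for $\mu$ versus that for $\nu$); I expect to handle the $B$-term directly via \eqref{Lipx} and \eqref{LIP'} since $B$ is bounded and Lipschitz in $\xi$, so only the genuinely singular $b,\si$ terms need the transform, and the difference between the two transforms is itself controlled by $\W_\theta(\mu_s,\nu_s)$ through the continuous dependence of the auxiliary PDE on its coefficients. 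Once $\mu_\cdot=\nu_\cdot$, pathwise uniqueness from (2) finishes the proof.
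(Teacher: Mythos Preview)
Your outline for parts (2) and (3) matches the paper's argument almost exactly: freeze the law, invoke strong well-posedness of the classical singular functional SDE, and then for uniqueness identify the two laws via a Zvonkin transform plus Gronwall before applying pathwise uniqueness. One small simplification relative to what you wrote: the paper (and you should) use a \emph{single} Zvonkin transform $\theta^{\lambda,\mu}$, built from the PDE with coefficients $b^\mu,\sigma^\mu$, and apply it to \emph{both} $X$ and $Y$. The cross terms $b^\nu-b^\mu$, $\sigma^\nu-\sigma^\mu$ then appear as inhomogeneities bounded by $L\,\W_\theta(\mu_t,\nu_t)$ via \eqref{LIP}; no comparison of two different transforms or continuous dependence of the auxiliary PDE on its coefficients is needed.

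For part (1), however, your Schauder fixed-point scheme has a genuine gap and is \emph{not} the route the paper takes. Under $(H^\theta)$ alone, $\sigma$ is only assumed measurable with $K^{-1}I\le\sigma\sigma^\ast\le KI$; no continuity in $x$ is imposed. For such diffusion coefficients, weak uniqueness of the decoupled functional SDE is not available (Stroock--Varadhan requires continuity of $\sigma\sigma^\ast$, and measurable uniformly elliptic examples without uniqueness exist in $d\ge 3$). Hence your map $\Phi:\mu(\cdot)\mapsto \L_{X^\mu_\cdot}$ is in general \emph{multi-valued}, and the Schauder argument as written collapses at the very first step. You could try to rescue it with a Kakutani-type set-valued fixed point, but then you need convexity and closedness of the solution set and upper hemicontinuity of the correspondence, none of which you have established.

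The paper avoids this entirely by an \emph{approximation-plus-iteration} construction: mollify $b$ and $a=\sigma\sigma^\ast$ to smooth $b^n,a^n$ (so $\hat\sigma^n=\sqrt{a^n}$ is Lipschitz in $x$), and define $X^n$ as the \emph{unique strong} solution of the SDE with coefficients $B(\cdot,\cdot,\L_{X^{n-1}_\cdot})$, $b^n(\cdot,\cdot,\L_{X^{n-1}_\cdot})$, $\hat\sigma^n(\cdot,\cdot,\L_{X^{n-1}_\cdot})$, starting from $X^0(t)=X_0(t\wedge 0)$. Uniform Krylov estimates (Lemma \ref{KK}) give tightness of $(X^n,W)$; Skorohod's representation yields $(\tilde X^n,\tilde W^n)\to(\tilde X,\tilde W)$ a.s.\ on a new probability space; and the limit passage in each term of the equation (this is where the ``new difficulty'' from the path-dependent $B$ is handled, via \eqref{Lipx}, boundedness of $B$, and dominated convergence) shows $(\tilde X,\tilde W)$ is a weak solution. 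The point is that at every stage $n$ the equation is classical with Lipschitz coefficients, so well-posedness is never an issue, and no fixed-point map on path-law space is needed.
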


When $B$, $b$ and $\si$ do not depend on the distribution, Theorem \ref{T1.1}  reduces back to the corresponding results derived for classical functional SDEs with singular coefficients, see for instance \cite{B}  and references within.


\subsection{Harnack Inequality}

In this subsection, we  investigate the dimension-free Harnack inequality introduced in \cite{RW10} for \eqref{E10}, see \cite{Wbook} and references within for general results on these type Harnack inequalities and applications.  We establish Harnack inequalities for $P_tf$ using  coupling by change of measures (see for instance \cite[\S 1.1]{Wbook}). To this end, we need to assume that the noise part is distribution-free; that is, we consider the following special version of \eqref{E10}:
\beq\label{E11} \d X(t)= B(t,X_t,\L_{X_t})\d t+b(t,X(t),\L_{X_t})\d t +\si(t,X(t))\d W(t),\ \ t\in [0,T].\end{equation}

As in \cite{HRW}, we define $P_tf(\mu_0)$ and $P_t^*\mu_0$ as follows:
$$(P_tf)(\mu_0)= \int_{\C} f  \d(P_t^*\mu_0)= \E f(X_t(\mu_0)),\ \ f\in \B_b(\C), t\in [0,T],  \mu_0\in \scr P_2,$$ where $X_t(\mu_0)$ solves \eqref{E11} with
  $\L_{X_0}=\mu_0.$
Let
$$\D=\bigg\{\phi:  [0,\infty)\to [0,\infty)  \text{\ is\ increasing},  \phi^2 \text{\ is\ concave,} \int_0^1\ff{\phi(s)}s\d s<\infty\bigg\}.$$
\beg{rem}\label{Dini}
 The condition $\int_0^1{\frac{\phi(s)}{s}\d s}<\infty$ is   known as the Dini condition. Obviously, $\D$ contains $\phi(s)=s^{\alpha}$ for any $\alpha\in(0,\frac{1}{2})$. Moreover, it also contains $\phi(s):=\frac{1}{\log^{1+\delta}(c+s^{-1})}$ for constants $\delta >0$ and large enough $c>0$ such that $\phi^{2}$ is concave.
 \end{rem}
We will need the following assumption.
\beg{enumerate}
\item[$\bf{(H)}$] $\|b\|_{\infty}+\|B\|_{\infty}<\infty$ and there exist a constant $K>1$ and $\phi\in \D$ such that for any
$t\in[0,T],\ x,y\in \mathbb{R}^d,$ and $\mu,\nu\in \scr P_2$, $\xi,\bar{\xi}\in\C$,
\beq\label{H1}
K^{-1}I\leq(\sigma\sigma^{\ast})(t,x) \leq KI,\
\|\sigma(t,x)-\sigma(t,y)\|^2_{\mathrm{HS}}\leq K|x-y|^2,
\end{equation}
\beq\label{b-phi}
|b(t,x,\mu)-b(t,y,\nu)|\leq \phi(|x-y|)+K\mathbb{W}_2(\mu,\nu),
\end{equation}
\beq\label{Blip}
|B(t,\xi,\mu)-B(t,\bar{\xi},\nu)|\leq K(\|\xi-\bar{\xi}\|_{\C}+\mathbb{W}_2(\mu,\nu)).
\end{equation}
\end{enumerate}
\beg{thm}\label{T3.1} Assume {\bf (H)}. \beg{enumerate} \item[$(1)$] There exists a  constant $C>0$ such that
  \beq\label{LH2}(P_{t}\log f)(\nu_0)\le  \log (P_{t}f)(\mu_0)+ \ff{C}{t-r}\W_2(\mu_0,\nu_0)^2\end{equation}
  for any $t\in (r,T],\mu_0,\nu_0\in\scr P_2,  f\in \B_b^+(\C)$ with $f\geq 1.$ Consequently, for any different $\mu_0,\nu_0\in \scr P_2$, and any $f\in \B_b(\C)$,
\begin{align} \label{GE}\ff{|(P_{t}f)(\mu_0)-(P_{t}f)(\nu_0)|^2}{\W_2(\mu_0,\nu_0)^2} \le  \ff{2C}{t-r}\sup_{\nu\in \mathbf{B}(\mu_0, \W_2(\mu_0,\nu_0))}\big\{(P_{t} f^2)(\nu)- (P_{t} f)^2(\nu)\big\},
\end{align}
where $\mathbf{B}(\mu_0, \W_2(\mu_0,\nu_0)):=\{\nu\in\scr P_2, \W_2(\mu_0,\nu)<\W_2(\mu_0,\nu_0)\}$.
 \item[$(2)$] There exist constants $p_0>1$ and $c_1,c_2>0$,  such that for any $p>p_0$, $t\in (r,T],  f\in \B_b^+(\C)$ and $ \mu_0,\nu_0\in\scr P_2$,

   \beq\label{H2'}(P_{t}f)^p(\nu_0)\le  (P_{t}f^p)(\mu_0)\Big(\E \Big(\e^{H_2(p,t)\big(1+\ff{|X(0-Y(0)|^2}{t-r} +\|X_0-Y_0\|_{\C}^2\big)}\Big)\Big)^p\end{equation}
   holds for  $\F_0$-measurable random variables $X_0,Y_0$ satisfying $\L_{X_0}=\mu_0$, $\L_{Y_0}=\nu_0$ . \end{enumerate}
\end{thm}

\subsection{Shift Harnack Inequality}

In this section, we establish the shift Harnack inequality for $P_t$  introduced in \cite{W14a}. To this end, we assume that $\sigma(t,x,\mu)$ does not depend on $x$. So SDE \eqref{E10} becomes
\beq\label{E5} \d X(t)= B(t,X_t,\L_{X_t})\d t+b(t,X(t),\L_{X_t})\d t +\si(t,\L_{X_t})\d W(t),\ \ t\in [0,T].\end{equation}


\beg{thm}\label{T5.1} Let $\si: [0,T]\times \scr P_2\to \R^d\otimes \R^d$ be  measurable such that
$\si$ is invertible with $\|\si\|_{\infty}+\|\si^{-1}\|_{\infty}<\infty$, and $b$, $B$ satisfy the corresponding conditions in {\bf (H)}.
\beg{enumerate} \item[$(1)$] For any $p>1, t\in (r,T], \mu_0\in \scr P_2, \eta\in C^1([-r,0],\mathbb{R}^d)$ and $f\in \B_b^+(\C)$,
\beg{align*}(P_{t}f)^p(\mu_0)\le &(P_{t}f^p(\eta+\cdot))(\mu_0)\times \exp\bigg[\ff{p}{2(p-1)}\beta(T,\eta,r)\bigg].\end{align*}
where
$$\beta(T,\eta,r)=\|\sigma^{-1}\|^2_\infty \left(C\ff {|\eta(-r)|^2}{T-r}+C\int_{-r}^0|\eta'(s)|^2\d s+CT\phi^2\left(C\|\eta\|_{\C}\right)+CT\|\eta\|_{\C}^2\right),$$
and $C>0$ is a constant.
Moreover, for any $f\in \B_b^+(\C)$ with $f\geq 1$,
$$(P_{t}\log f)(\mu_0)\le \log (P_{t} f(\eta+\cdot))(\mu_0)+\beta(T,\eta,r).$$
\end{enumerate} \end{thm}

\section{Preparations}
 We first recall Krylov's estimate in the study of SDEs.
\beg{defn}[Krylov's Estimate]  \emph{An $\F_t$-adapted process $\{X(s)\}_{0\le s\le T}$ is said to satisfy $K$-estimate, if   for any $(p,q)\in \scr K$, there exist    constants $\dd\in (0,1)$ and
$C>0$ such that for any nonnegative measurable function $f$ on $[0,T]\times \R^d$,
\beq\label{KR1} \E\bigg(\int_s^t f(r,X(r)) \d r\Big| \F_s\bigg) \le C (t-s)^\dd \|f\|_{L_p^q(T)},\ \ \ 0\le s\le t\le T. \end{equation}}
\end{defn}
We note that \eqref{KR1} implies the following Khasminskii type estimate, see for instance \cite[Lemma 3.5]{XZ} and its proof: there exists a constant $c>0$ such that
\beq\label{APP3} \E\bigg(\bigg(\int_s^t f(r,X(r)) \d r\bigg)^n\Big| \F_s\bigg) \le c n! (t-s)^{\dd n}\|f\|_{L_p^q(T)}^n,\ \ \ 0\le s\le t\le T, \end{equation} and
for any $\ll>0$ there exists a constant $\LL=\LL(\ll,\dd,c)>0$  such that
\beq\label{KR2}   \E\big(\e^{\ll\int_0^T f(r,X(r)) \d r}\big| \F_s\big) \le \e^{\LL  \left(1+\|f\|_{L_p^q(T)}\right)},\ \ s\in[0,T]. \end{equation}

We first present a new result on Krylov's estimate, then recall one lemma from \cite{GM} for the construction of weak solution, and finally introduce one lemma on the relation between existence of strong and weak solutions.

\subsection{Krylov's Estimate}

Consider the following SDE on $\R^d$:
\beq\label{EN} \d X(t)= B(t,X_t)\d t+b(t,X(t))\d t + \si(t,X(t))\d W(t),\ \ t\in [0,T].\end{equation}
\beg{lem}\label{KK} Let $T>0$, and let $p,q\in (1,\infty)$ with $\ff d p +\ff 2 q<1$.  Assume that $\sigma(t,x)$ is uniformly continuous in $x\in\mathbb{R}^d$ uniformly with respect to $t\in[0,T]$, and that for a constant $K>1$ and some nonnegative function $F\in L_p^q(T)$
such that \beq\label{APP1} K^{-1}I\le \si(t,x)\si^*(t,x)\le K I,\ \ (t,x)\in [0,T]\times \R^d,\end{equation}
 \beq\label{APP2} |b(t,x)| \le K+ F(t,x),\ \ (t,x)\in [0,T]\times \R^d.\end{equation}
  \begin{equation*}|B(t,\xi)| \le K,\ \ (t,\xi)\in [0,T]\times \C.\end{equation*}
 Then for any $(\aa,\bb)\in \scr K$, there exist constants  $C=C(\dd,K, \aa,\bb, \|F\|_{L_p^q(T)})>0$  and $\dd=\dd(\aa,\bb)>0$, such that for any $s_0\in [0,T)$ and any solution $(X_{s_0,t})_{t\in [s_0,T]}$ of $\eqref{EN}$ from time $s_0$,
\beq\label{APP'}\E\bigg[\int_s^t |f|(r, X_{s_0,r}) \d r\Big| \F_s\bigg]\le   C  (t-s)^{\dd}\|f\|_{L_{\aa}^{\bb}(T)},\ s_0\leq s<t\leq T, f\in L_{\aa}^{\bb}(T).\end{equation}
 \end{lem}
 \beg{proof} Let $$ \tilde{W}(\cdot)=W(\cdot)+\int_0^\cdot B(r, X_{s_0,r})\d r.$$
 Since $B$ is bounded, by Girsanov's theorem, $\tilde{W}$ is a $d$-dimensional Brownian motion on $[0,T]$ under $\mathbb{Q}=R(T)\mathbb{P}$, where
\begin{align*}
R(T)&=\exp\bigg[-\int_{s_0}^T\<B(r, X_{s_0,r}),\d W(r)\>-\frac{1}{2}\int_{s_0}^T |B(r, X_{s_0,r})|^2\d r\bigg].
\end{align*}
Moreover, the boundedness of $B$ implies $\E R(T)^{-1}<\infty$. Thus, $(\{X_{s_0,}(r)\}_{r\in[s_0,T] }, \tilde{W})$ is a weak solution
\beq\label{EN'} \d X(t)= b(t,X(t))\d t + \si(t,X(t))\d W(t).\end{equation}
By \cite[Lemma 3.1]{HW18}, there exists a constant $C=C(\dd,\bar{K}, \aa,\bb)>0$ and $\dd=\dd(\aa,\bb)>0$ such that
\beq\label{APP3'}
\E^{\mathbb{Q}}\bigg[\int_s^t |f|(r, X_{s_0,}(r)) \d r\Big| \F_s\bigg]\le   C  (t-s)^{\dd}\|f\|_{L_{\aa}^{\bb}(T)},\ s_0\leq s<t\leq T, f\in L_{\aa}^{\bb}(T).\end{equation}
This together with \eqref{APP3} and H\"{o}lder inequality implies that
\begin{align*}
&\left(\E\bigg[\int_s^t |f|(r, X_{s_0,}(r)) \d r\Big| \F_s\bigg]\right)^2=\E R(T)^{-1}\times \E^\Q\bigg[\left(\int_s^t |f|(r, X_{s_0,}(r)) \d r\right)^2\Big| \F_s\bigg]\\
&\leq C\E^\Q\bigg[\left(\int_s^t |f|(r, X_{s_0,}(r)) \d r\right)^2\Big| \F_s\bigg]\\
&\le   C  (t-s)^{2\dd}\|f\|^2_{L_{\aa}^{\bb}(T)},\ s_0\leq s<t\leq T,, f\in L_{\aa}^{\bb}(T).\end{align*}
Then the proof is finished.
 \end{proof}

\subsection{Convergence  of Stochastic Processes}
To prove Theorem \ref{T1.1}(1), we will use the following lemma due to \cite[Lemma 5.1]{GM}.
\begin{lem}\label{PC}  Let $\{\psi^n\}_{n\geq 1}$ be a sequence of $d$-dimensional processes defined on
some probability space. Assume that there exists a constant $\alpha>0$ such that
\begin{align}\label{Ub}\sup_{n\geq 1}\sup_{t\in[0,T]}\E(|\psi^n_t|^\alpha)<\infty,
\end{align}
and for any $\vv>0$,
\begin{align}\label{ETC}
\lim_{\theta\to0}\sup_{n\geq 1}\sup_{s,t\in[0,T], |t-s|\leq \theta}\E\left(|\psi^n_t-\psi^n_s|^\alpha\right)=0.
\end{align}
  Then there exist a sequence $\{n_k\}_{k\geq 1}$, a probability space $(\tilde{\Omega},\tilde{\F}, \tilde{\P})$ and
stochastic processes $\{X_t, X^k_t\}_{t\in [0,T]} (k \geq  1)$, such that for every $t\in [0,T]$, $\L_{\psi^{n_k} _t}|\P=\L_{X^k_t}|\tilde{\P}$, and $\tilde{\P}$-a.s. $X^k$ converges to $X$ as $k\to\infty$. Moreover, $X^k$ converges to $X$ weakly. 
\end{lem}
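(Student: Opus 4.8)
The plan is to prove Lemma~\ref{PC} by combining a tightness argument (via the Aldous–Kolmogorov criterion adapted to laws on path space) with the Skorokhod representation theorem, following the classical scheme of \cite{GM}. First I would regard each $\psi^n$ as a random element of the path space; the estimates \eqref{Ub} and \eqref{ETC} are designed precisely so that the induced family of laws $\{\L_{\psi^n}\}_{n\ge 1}$ on an appropriate function space is tight. Concretely, \eqref{Ub} gives a uniform moment bound at each fixed time (hence uniform stochastic boundedness, controlling the ``compact containment'' condition), while \eqref{ETC} is a uniform modulus-of-continuity-in-mean condition that, by a Kolmogorov–Chentsov type argument, forces equicontinuity of the sample paths in probability, uniformly in $n$. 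Together these yield tightness of $\{\L_{\psi^n}\}$ in the space of continuous (or c\`adl\`ag, depending on the intended setting) $\R^d$-valued paths on $[0,T]$; here one should be slightly careful, since \eqref{ETC} controls only one-dimensional marginals of increments rather than the full modulus, so the tightness is really in the sense of finite-dimensional distributions plus a flat-concentration estimate, which is exactly what is needed to extract a limit.

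Next I would invoke Prohorov's theorem to extract a subsequence $\{n_k\}$ along which $\L_{\psi^{n_k}}$ converges weakly to some probability measure $\mathbf{Q}$ on the path space. By the Skorokhod representation theorem, there exist a probability space $(\tilde\OO,\tilde\F,\tilde\P)$ and random elements $X^k, X$ on it with $\L_{X^k}|\tilde\P = \L_{\psi^{n_k}}|\P$ (so in particular the one-time marginal identity $\L_{\psi^{n_k}_t}|\P = \L_{X^k_t}|\tilde\P$ holds for every $t\in[0,T]$, since evaluation at $t$ is measurable), and such that $X^k \to X$ $\tilde\P$-almost surely in the path-space topology. Almost sure convergence in that topology implies in particular $\tilde\P$-a.s. pointwise (indeed locally uniform) convergence $X^k_t \to X_t$, and weak convergence $X^k \Rightarrow X$ is then automatic. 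This delivers every assertion of the lemma.

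The main obstacle I expect is the tightness step, and within it the passage from the marginal increment bound \eqref{ETC} to genuine tightness on path space. The Kolmogorov–Chentsov criterion in its standard form requires a two-parameter estimate of the type $\E|\psi^n_t-\psi^n_s|^\alpha \le C|t-s|^{1+\varepsilon}$; what \eqref{ETC} supplies is weaker and merely uniform-in-$n$ smallness as the mesh shrinks. The way around this, as in \cite{GM}, is to not insist on tightness in $C([0,T];\R^d)$ with the uniform topology but rather to work with a weaker notion (e.g.\ convergence of finite-dimensional distributions supplemented by the ``no concentrated oscillation'' condition, or tightness in an $L^p$-type or Meyer–Zheng pseudo-path topology), which is still strong enough to run Prohorov plus Skorokhod and to preserve the one-time marginals. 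I would therefore phrase the tightness precisely in whichever topology \cite[Lemma 5.1]{GM} uses, cite that lemma for the technical core, and spend the remaining effort checking that hypotheses \eqref{Ub} and \eqref{ETC} match its assumptions verbatim. The rest of the argument — Prohorov, Skorokhod, reading off the marginal identity and the a.s.\ and weak convergence — is routine.
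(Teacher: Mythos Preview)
Your proposal is correct and follows essentially the same approach as the paper: deduce tightness from \eqref{Ub} and \eqref{ETC}, extract a weakly convergent subsequence via Prohorov, and then apply the Skorokhod representation theorem to produce the processes $X^k,X$ on a common probability space with matching laws and $\tilde\P$-a.s.\ convergence. The paper's own proof is in fact terser than yours---it simply asserts tightness without discussing the path-space topology or the gap between \eqref{ETC} and the Kolmogorov--Chentsov criterion---so your more careful treatment of that step is a welcome elaboration rather than a deviation.
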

\begin{proof} \eqref{Ub} and \eqref{ETC} imply that $\{\psi^n\}_{n\geq 1}$ is tight. Then there exists a subsequence $\{m_l\}_{l\geq 1}$ such that $\{\psi^{m_l}\}_{l\geq1}$ is weakly convergent. By
Skorohod representation theorem, there exists a subsequence $\{n_k\}_{k\geq 1}$ of $\{m_l\}_{l\geq 1}$, a probability space $(\tilde{\Omega},\tilde{\F}, \tilde{\P})$ and
stochastic processes $\{X_t, X^k_t\}_{t\in [0,T]} (k \geq  1)$, such that $\L_{\psi^{n_k} }|\P=\L_{X^k}|\tilde{\P}$, and $\tilde{\P}$-a.s. $X^k$ converges to $X$ as $k\to\infty$. It is easy to see that $X^k$ converges to $X$ weakly. The proof is completed. 
\end{proof}

\subsection{Relation between existence of Strong and Weak Solutions}
We present a result on the existence of strong solutions deduced from weak solutions.
Consider the following SDE
\begin{align}\label{class0}
\d X(t)= \hat{B}(t,X_t,\L_{X_t})\,\d t+\hat{\sigma}(t,X_t,\L_{X_t})\,\d W(t),\ \ 0\le t\le T,
\end{align}
where $\hat{B}:[0,T]\times \C\times\scr P\to\mathbb{R}^d$ and $\hat{\sigma}:[0,T]\times \C\times\scr P\to\mathbb{R}^d\otimes\mathbb{R}^d$ are measurable.
\begin{lem}\label{SS} Let $(\bar\Omega, \bar\F_t,\bar W(t),\bar\P)$ and $\bar{X}_t$ be a weak solution to \eqref{class0} with $\mu_t:=\L_{\bar X_t}|\bar\P$. If the SDE
\begin{align}\label{class}
\d X(t)= \hat{B}(t,X_t,\mu_t)\,\d t+\hat{\sigma}(t,X_t,\mu_t)\,\d W(t),\ \ 0\le t\le T
\end{align}
has a unique strong solution $X_t$ up to life time with $\L_{X_0}=\mu_0$, then   \eqref{class0} has a strong solution.
\end{lem}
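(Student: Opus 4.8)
The plan is to leverage the classical Yamada--Watanabe philosophy adapted to the distribution-dependent setting, exactly as the strategy outlined in the introduction suggests. The key observation is that once we freeze the marginal flow $\mu_t := \L_{\bar X_t}|\bar\P$ coming from the given weak solution of \eqref{class0}, equation \eqref{class} is a \emph{classical} (distribution-free) functional SDE with coefficients $\hat B(t,\cdot,\mu_t)$ and $\hat\si(t,\cdot,\mu_t)$. By hypothesis, \eqref{class} enjoys strong existence and pathwise uniqueness (it has a unique strong solution up to life time with $\L_{X_0}=\mu_0$). On the other hand, $(\bar\Omega,\bar\F_t,\bar W(t),\bar\P,\bar X_t)$ is by construction a \emph{weak} solution of \eqref{class}, since $\L_{\bar X_t}|\bar\P = \mu_t$ makes the frozen equation coincide with \eqref{class0} along this particular solution. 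So \eqref{class} has both a weak solution and pathwise uniqueness.

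First I would invoke the Yamada--Watanabe theorem for functional SDEs: weak existence together with pathwise uniqueness implies strong existence and uniqueness in law. Concretely, this yields a measurable map $\Phi$ (on the path space of the driving Brownian motion, adapted in the appropriate filtration sense) such that the unique strong solution of \eqref{class} with initial value $X_0$ is $X_\cdot = \Phi(X_0, W_\cdot)$, and moreover \emph{any} weak solution of \eqref{class} with the same initial law is realized this way. In particular, applying $\Phi$ on the original stochastic basis $(\OO,\F_t,\P)$ carrying the Brownian motion $W(t)$ and an $\F_0$-measurable $X_0$ with $\L_{X_0}=\mu_0$, we obtain a process $X_\cdot=\Phi(X_0,W_\cdot)$ that is $\F_t$-adapted and solves \eqref{class} on $(\OO,\F_t,\P)$.

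The final step is to check that this $X_\cdot$ actually solves \eqref{class0}, not merely the frozen equation \eqref{class}. For this it suffices to verify $\L_{X_t}|\P = \mu_t$ for all $t\in[0,T]$. But since $X_\cdot$ is a strong (hence weak) solution of \eqref{class} and $\bar X_\cdot$ is also a weak solution of \eqref{class}, and both have initial law $\mu_0$, uniqueness in law for \eqref{class} (which follows from pathwise uniqueness via Yamada--Watanabe) gives $\L_{X_\cdot}|\P = \L_{\bar X_\cdot}|\bar\P$ as measures on path space; in particular the one-dimensional marginals agree, so $\L_{X_t}|\P = \L_{\bar X_t}|\bar\P = \mu_t$. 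Substituting this identity back, $\hat B(t,X_t,\mu_t) = \hat B(t,X_t,\L_{X_t}|\P)$ and $\hat\si(t,X_t,\mu_t)=\hat\si(t,X_t,\L_{X_t}|\P)$, so $X_\cdot$ solves \eqref{class0} on $(\OO,\F_t,\P)$, i.e. it is a strong solution. The life-time caveat is handled by stopping: one works up to the explosion time and notes that the law identification is valid on each stopping interval.

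The main obstacle, and the only genuinely technical point, is the rigorous application of the Yamada--Watanabe theorem in the path-dependent (segment-process) framework: one must ensure that the measurable solution map $\Phi$ is compatible with the filtration $\{\F_t\}$ on the target space and that ``weak existence $+$ pathwise uniqueness $\Rightarrow$ uniqueness in law'' holds for functional SDEs with merely measurable, possibly singular coefficients — but this is standard once pathwise uniqueness is assumed, and can be quoted (e.g.\ from the references for classical functional SDEs cited after Theorem \ref{T1.1}). A secondary bookkeeping issue is matching the Brownian motion and initial data on the original basis versus the weak-solution basis; this is routine because strong solutions are defined on an arbitrary prescribed basis. No estimate beyond what is already available is needed here, so the proof is short.
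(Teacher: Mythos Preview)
Your proposal is correct and follows essentially the same route as the paper: freeze the marginal flow $\mu_t$, observe that $\bar X_t$ is a weak solution of the frozen classical SDE \eqref{class}, invoke Yamada--Watanabe so that pathwise uniqueness yields uniqueness in law, and conclude $\L_{X_t}=\mu_t$, hence $X_t$ solves \eqref{class0}. The only difference is cosmetic: you pass through the measurable solution map $\Phi$ to build $X$ on a prescribed basis, whereas the paper simply takes the strong solution $X_t$ of \eqref{class} granted by hypothesis and uses weak uniqueness directly to identify its law with $\mu_t$ --- since the strong solution is already assumed to exist, the $\Phi$-construction is not needed.
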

\begin{proof} Since $\mu_t= \scr L_{\bar X_t}|\bar \P$,   $\bar{X}_t$ is a weak solution to \eqref{class}. By Yamada-Watanabe principle, the strong uniqueness of \eqref{class} implies the weak uniqueness, so that $X_t$ is nonexplosive with    $\L_{X_t}=\mu_t, t\ge 0$. Therefore, $X_t$ is a strong solution to \eqref{class0}.
\end{proof}
\section{Proofs of Theorem \ref{T1.1}}
\subsection{Proof of Theorem \ref{T1.1}(1)-(2)}

  According to \cite[Theorem 1.4]{B},  the condition in Theorem \ref{T1.1}(2) implies that the SDE \eqref{class} has a unique strong solution. So,
  by Lemma \ref{SS}, Theorem \ref{T1.1}(2) follows from Theorem \ref{T1.1}(1).  Thus, we only prove the existence of weak solution below.

 We set $a(t,x,\mu) :=(\si\si^\ast)(t,x,\mu)$ for $t \in [0,T]$, and $b(t, x,\mu) := 0$, $a(t,x,\mu) :=  I$ for $t \in \R\setminus [0,T]$.
Let $0\le \rr\in C_0^\infty(\R\times\R^d)$ with support contained in $\{(r,x): |(r,x)|\le 1\}$ such that $\int_{\R\times\R^d} \rr(r,x)\d r\d x=1.$
For any $n\ge 1$, let $\rr_n(r,x)= n^{d+1} \rr(nr, nx)$ and define
\begin{equation}\begin{split}\label{approx}
&a^n(t,x,\mu)=\int_{\R\times\R^d} \sigma\sigma^\ast(s,x',\mu) \rr_n (t-s, x-x')\d s \d x',\\
&b^n(t,x,\mu)=\int_{\R\times\R^d} b(s,x',\mu) \rr_n (t-s, x-x')\d s \d x',\ \ (t,x,\mu)\in \R\times\R^d\times\scr P.
\end{split}\end{equation}
 Let $\hat{\sigma}^n=\ss{a^n}$ and $\hat{\sigma}=\ss{a}$. Consider the following SDE:
\beq\label{E1''} \d X(t)= b(t,X(t), \L_{X_t})\d t+B(t,X_t,\L_{X_t})\d t +\hat{\si}(t,X(t), \L_{X_t})\d W(t).\end{equation}
Noting that $\sigma\sigma^\ast=\hat{\sigma}\hat{\sigma}^\ast$, in order to prove that the SDE \eqref{E10} has a weak solution, we only need to prove that SDE \eqref{E1''} has a weak solution.

Since by \cite {HW18}, there exist  subsequence $\{n_k\}$ and $G\in L^q_p(T)$ such that $|b^{n_k}|^2\leq K+G$. Below, we use the subsequence $b^{n_k}$ replacing $b^n$. For simplicity, we still denote by $b^n$.

For any $n\ge 1$ there exists a constant  $c_n>0$ such that
\begin{align*}   |b^n(t,x,\mu)-b^n(s,x',\mu)|+ \|\hat{\sigma}^n(t,x,\mu)-\hat{\sigma}^n(s,x',\mu)\|
 \le c_n \big(|t-s|+|x-x'| \big)
\end{align*}
holds for all $s,t\in \R, x,x'\in \R^d$ and $\mu\in \scr P_1$.
This and \eqref{Lipx} imply that the SDE with $X^0(t)=X_0(t\wedge 0)$:
 \beq\label{X^n} \begin{split}\d X^n(t)&=  B(t,X^n_t,\L_{X_t^{n-1}}) \d t+b^n(t,X^n(t),\L_{X^{n-1}_t})\d t+ \si^n(t,X^n(t), \L_{X^{n-1}_t}) \d W(t)\end{split}\end{equation}
 with $X_0^n= X_0$ has a unique strong solution $(X^n_t)_{t\in [0,T]}$.   Moreover, Lemma \ref{KK} implies that for any  $(p,q)\in \scr K$,
\beq\label{KRE} \E  \int_s^t f(r,X^n(r))\d r \le C(t-s)^\dd \|f\|_{L_p^q(T)},\ \ 0\le f\in L_p^q(T), n\ge 1\end{equation} holds for some constants $C>0$ and $ \dd\in (0,1).$

 We first show that Lemma \ref{PC} applies to  $(X^n,W)$ replacing $\psi_n$, for which it suffices to verify conditions \eqref{Ub} and \eqref{ETC}  with $\psi_n:=X^n$. By condition (2) in $(H^\theta)$ and \eqref{APP3} implied by \eqref{APP'}, there exist  constants $c_1,c_2>0$ such that
\begin{equation}\label{AP3}\beg{split}
 \E |X  ^n(t)|^\theta &\leq c_1\bigg\{\E |X(0)|^\theta+\E\bigg(\int_0^T|b^n(t,X^n(t),\L_{X^{n-1}_t})|\,\d t\bigg)^\theta\\
 &\ \ \ \ \ \ +\E\bigg(\int_0^T|B(t,X^n_t,\L_{X_t^{n-1}}) |\,\d t\bigg)^\theta\\
 &\ \ \ \ \ \ \ \ \  +  \E\left(\int_0^T\|\sigma^n(t,X^n(t),\L_{X^{n-1}_t})\|^2\,\d t\right)^\frac{\theta}{2}\bigg\}\\
&\leq c_2\Big(\E |X(0)|^\theta + T^\theta+ \|G\|_{L^{q}_{p}(T)}^\theta + T^{\ff \theta 2}\Big) <\infty,  \ \ n\ge 1, t\in [0,T].
\end{split}\end{equation} Thus, \eqref{Ub} holds for $\psi_n=X^n$ .

Next, by the same reason, there exists a constant $c_3>0$ such that for any $0 \leq s \leq t \leq T$,
\begin{align*}
&\E |X^n(t)-X^n(s)|\\
&\leq \E\int_s^t|b^n(r,X^n(r),\L_{X^{n-1}_r})|\,\d r+\E\int_s^t|B(r,X^n_r,\L_{X_r^{n-1}}) |\,\d r\\
&+  \E\left(\int_s^t\|\sigma^n(r,X^n(r),\L_{X^{n-1}_r})\|^2\,\d r\right)^\frac{1}{2}\\
&\leq c_3\big(t-s + (t-s)^\dd\|G\|_{L^q_p(T)}+ (t-s)^{\frac{1}{2}}\big).
\end{align*}
Hence, \eqref{ETC}   holds for $\psi_n=X^n$ .
According to Lemma \ref{PC}, there exists a subsequence of $(X^n,W)_{n\ge 1}$, denoted again by $(X^n,W)_{n\ge 1}$, stochastic processes $(\tilde{X}^n,\tilde{W}^n)_{n\ge 1}$ and  $(\tilde{X}, \tilde{W})$ on a complete probability space $(\tilde{\OO}, \tilde{\F}, \tilde{\P})$ such that $\L_{(X^n, W)}|\P=\L_{(\tilde{X}^n, \tilde{W}^n)}|\tilde{\P}$ for any $n\geq 1$, and $\tt\P$-a.s. $\lim_{n\to\infty}(\tilde{X}^n, \tt W^n)=(\tilde{X}, \tt W)$. As in \cite{GM}, let $\tilde{\F}^n_{t}$
be  the completion of the $\si$-algebra generated by the  $\{\tilde{X}^n(s), \tilde{W}^n(s): s\leq t\}$. Then as shown in \cite{GM},   $\tilde{X}^n(t)$ is $\tilde{\F}^n_{t}$-adapted and continuous (since $X^n$ is continuous and $\L_{X^n}|\P=\L_{\tt X^n}|\tt\P$),   $\tilde{W}^n$
is a $d$-dimensional Brownian motion on $(\tt \OO, \{\tt \F_t^n\}_{t\in [0,T]},\tt\P)$,    and $(\tilde{X}^n(t),\tilde{W}^n(t))_{t\in [0,T]}$ solves the SDE
\beq\label{titlde-X^n}\begin{split}
\d \tilde{X}^n(t)&= b^n(t,\tilde{X}^n(t),\L_{\tilde{X}^{n-1}_t}|\tilde{\P})\,\d t\\
&+ B(t,\tilde{X}^n_t,\L_{\tilde{X}^{n-1}_t|\tilde{\P}})\,\d t+ \sigma^n(t,\tilde{X}^n(t),\L_{\tilde{X}^{n-1}_t}|\tilde{\P})\,\d \tilde{W}^n(t)
\end{split}\end{equation}
with $\L_{\tilde{X}^n_0}|\tilde{\P}=\L_{X_0}|\P.$
  Simply denote $\L_{\tilde{X}^n_t}|\tilde{\P}=\L_{\tilde{X}^n_t}$ and $\L_{\tilde{X}_t}|\tilde{\P}=\L_{\tilde{X}_t}$.

For any $n\ge 1$, we have
$$\left| \int_{0}^s B(t,\tilde{X}^n_t,\L_{\tilde{X}_t^{n-1}}) \d t-\int_{0}^sB(t,\tilde{X}_t,\L_{\tilde{X}_t}) \d t\right|\le I_1(s)+ I_2(s),$$ where
\begin{align*}
&I_1(s):= \left|\int_0^s B(t,\tilde{X}^n_t,\L_{\tilde{X}_t^{n-1}}) \d t-\int_0^sB(t,\tilde{X}_t,\L_{\tilde{X}_t^{n-1}})\d t\right|,\\
&I_2(s):=\left|\int_0^s B(t,\tilde{X}_t,\L_{\tilde{X}_t^{n-1}}) \d t-\int_0^sB(t,\tilde{X}_t,\L_{\tilde{X}_t}) \d t\right|.
\end{align*}
Below we estimate these $I_i(s)$ respectively.

Firstly, by Chebyshev's inequality, we arrive at
\begin{align*}
\tilde{\P}(\sup_{s\in[0,T]}I_1(s)\geq\varepsilon)&\leq \frac{1}{\varepsilon}\E\int_{0}^T \left|B(t,\tilde{X}^n_t,\L_{\tilde{X}_t^{n-1}})-B(t,\tilde{X}_t,\L_{\tilde{X}_t^{n-1}})\right| \d t\\
\end{align*}
Since $\tilde{\P}$-a.s. $\tilde{X}^n_t$ converges to $\tilde{X}_t$, by \eqref{Lipx} and the boundedness of $B$, we may apply the dominated convergence theorem to derive
\begin{equation}\begin{split}\label{I1J}
&\limsup_{n\to\infty}\tilde{\P}(\sup_{s\in[0,T]}I_1(s)\geq\varepsilon)\leq \frac{1}{\varepsilon}\E\int_{0}^T \lim_{n\to\infty}L_0\|\tilde{X}^n_t-\tilde{X}_t\|_{\C} \d t=0.
\end{split}\end{equation}
Furthermore, since for any $t\in[0,T], \xi\in\C$, $B(t,\xi,\cdot)$ is continuous on $\scr P$, and $\tilde{X}^n_t$ converges to $\tilde{X}_t$ weakly, then it is not difficult to see from Chebyshev's inequality and dominated convergence theorem that 
\begin{align*}
&\limsup_{n\to\infty}\tilde{\P}\Big(\sup_{s\in[0,T]}I_2(s)\geq\varepsilon\Big)\\
&\leq\limsup_{n\to\infty}\frac{1}{\varepsilon}
\int_{0}^T\E\left|B(t,\tilde{X}_t,,\L_{\tilde{X}_t^{n-1}})-B(t,\tilde{X}_t,\L_{\tilde{X}_t})\right|\,\d t=0.
\end{align*}
Thus, we have
\begin{equation*} \lim_{n\to\infty}\tilde{\P}\left(\sup_{s\in[0,T]}\left| \int_{0}^s B(t,\tilde{X}^n_t,\L_{\tilde{X}_t^{n-1}}) \d t-\int_{0}^sB(t,\tilde{X}_t,\L_{\tilde{X}_t}) \d t\right|\geq\varepsilon\right)=0.\end{equation*}
 By \cite[(4.5)-(4.6)]{HW18}, we have
\begin{equation*}
\lim_{n\to\infty}\tilde{\P}\left(\sup_{s\in[0,T]}\int_{0}^s| b^n(t,\tilde{X}^n(t),\L_{\tilde{X}^{n-1}_t}) -b(t,\tilde{X}(t),\L_{\tilde{X}_t})|\,\d t\geq\varepsilon\right)=0,
\end{equation*}
and
\begin{equation*}\lim_{n\to\infty}\tilde{\P}\left(\sup_{s\in[0,T]}\left| \int_{0}^s\si^n(t,\tilde{X}^n(t),\L_{\tilde{X}^{n-1}_t})\d\tilde{W}^n(t)- \int_{0}^s\si(t,\tilde{X}(t),\L_{\tilde{X}_t})\,\d \tilde{W}(t)\right|\geq\varepsilon\right)=0.
\end{equation*}
Then $(\tilde{X}(t),\tilde{W}(t))_{t\in [0,T]}$ is a weak solution to \eqref{E10} by taking limit in \eqref{titlde-X^n}.
\subsection{Uniqueness on Strong Solutions}
In this subsection, we consider uniqueness of strong solutions of \eqref{E10}. To this end, we give the following conditions.
\beg{enumerate} \item[{\bf(A)}] There exist constants $K>1$ and $\theta\ge 1$ such that the following assumptions hold.
\item[{\bf(A1)}]  $\sigma$ is uniformly continuous in $x\in\mathbb{R}^d$ uniformly with respect to $(t,\mu)\in[0,T]\times\scr P_{\theta},$ and for any $(t,x)\in [0,T]\times\R^d, \mu,\nu\in\scr P_\theta$,
    $$K^{-1} I \le (\sigma\sigma^\ast)(t,x,\mu)\le K I, $$
$$  |b(t,x,\mu)-b(t,x,\nu)|+ \|\sigma(t,x,\mu)- \sigma(t,x,\nu)\| \leq K\mathbb{W}_\theta(\mu,\nu).$$
\item[{\bf(A2)}]
 For any $\mu_{\cdot}\in C([0,T]; \scr P_\theta)$, $b^\mu(t,x):= b(t, x, \mu_t)$ and $\si^\mu(t,x):= \si(t,x,\mu_t)$ satisfy
$| b^\mu|^2+\|\nn \si^\mu\|^2 \in L_p^q(T)$ for some $(p,q)\in \scr K$, where $\nn$ is the weak gradient in the space variable $x\in \R^d$.
\item[{\bf(A3)}] $B$ is bounded and for any $t\in[0,T], \xi,\bar{\xi}\in\C,\mu,\nu\in\scr P_\theta$, it holds
 \begin{equation}\label{Lipx'}
 |B(t,\xi,\mu)-B(t,\bar{\xi},\nu)|\leq K(\|\xi-\bar{\xi}\|_{\C}+\mathbb{W}_\theta(\mu,\nu)).
 \end{equation}
\end{enumerate}
We will use the following result for the maximal operator:
\begin{align}\label{max}
\M h(x):=\sup_{r>0}\frac{1}{|B(x,r)|}\int_{B(x,r)}h(y)\d y,\ \  h\in L^1_{loc}(\mathbb{R}^d), x\in \R^d,
 \end{align}
 where $B(x,r):=\{y: |x-y|<r\},$  see   \cite[Appendix A]{CD}.

 \begin{lem} \label{Hardy}  There exists a constant $C>0$ such that for any continuous and weak differentiable function $f$,
 \beq\label{HH1}
|f(x)-f(y)|\leq C|x-y|(\M |\nabla f|(x)+\M |\nabla f|(y)),\ \  {\rm a.e.}\ x,y\in\R^d.\end{equation}
Moreover, for any $p>1$,    there exists a constant $C_{p}>0$ such that
\beq\label{HH2}
\|\M f\|_{L^p}\leq C_{p}\|f\|_{L^p},\ \ f\in L^p(\R^d).
 \end{equation}
\end{lem}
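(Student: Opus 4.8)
The plan is to prove the two inequalities separately, both being classical facts about the Hardy--Littlewood maximal function. For \eqref{HH1}, I would first reduce to the case where $x$ and $y$ are Lebesgue points of $\nabla f$ (this holds a.e., so the "a.e." in the statement is harmless). For such points, write $f(x)-f(y)$ as an integral of the directional derivative along a suitable path. The cleanest route is not the straight segment but rather to go through an auxiliary point and average: for a ball $B$ containing both $x$ and $y$, one has the standard Poincar\'e-type pointwise bound
$$|f(x)-f_B|\le C\int_B\frac{|\nabla f(z)|}{|x-z|^{d-1}}\,\d z,$$
where $f_B$ denotes the average of $f$ over $B$, and the Riesz-potential integral on the right is dominated by $C\,r(B)\,\M|\nabla f|(x)$ by splitting $B$ into dyadic annuli around $x$. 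Applying this with $B=B\big(\frac{x+y}{2},|x-y|\big)$, and the analogous estimate at $y$, and using the triangle inequality $|f(x)-f(y)|\le|f(x)-f_B|+|f(y)-f_B|$ with $r(B)\asymp|x-y|$, yields \eqref{HH1}. Alternatively, one can integrate $|\nabla f|$ along segments from $x$ and from $y$ to points ranging over a small ball and average, which gives the same bound more elementarily; I would use whichever is shortest to write.

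For \eqref{HH2}, this is exactly the strong $(p,p)$ boundedness of the Hardy--Littlewood maximal operator for $p>1$, a classical theorem (obtained via the weak $(1,1)$ estimate from the Vitali covering lemma together with the trivial $L^\infty$ bound and the Marcinkiewicz interpolation theorem). I would simply cite this, or refer to \cite[Appendix A]{CD} as the paper already does for the definition of $\M$.

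The main obstacle — really the only nontrivial point — is the passage from "integral of $|\nabla f|$ over a ball weighted by $|x-z|^{1-d}$" to "$\,\asymp |x-y|\,\M|\nabla f|(x)$", i.e. the dyadic-annulus decomposition: on the annulus $\{z: 2^{-k-1}r\le|x-z|<2^{-k}r\}$ the weight is comparable to $(2^{-k}r)^{1-d}$ and the integral of $|\nabla f|$ over the slightly larger ball $B(x,2^{-k}r)$ is at most $C(2^{-k}r)^d\M|\nabla f|(x)$, so the $k$-th term is $\le C\,2^{-k}r\,\M|\nabla f|(x)$ and the geometric series sums to $C r\,\M|\nabla f|(x)$. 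Everything else is bookkeeping. I expect this lemma to be invoked in the Zvonkin-transform and gradient-estimate arguments later, where $f$ will be (a component of) the solution of an associated PDE and $\nabla f\in L^q_p$, so that \eqref{HH1} combined with \eqref{HH2} converts pointwise increments of $f$ into quantities controlled through Krylov's estimate.
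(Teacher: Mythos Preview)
Your sketch is correct and follows the standard route to both inequalities; nothing is missing. Note, however, that the paper does not actually prove this lemma: it is stated without proof, with the sentence preceding it (``see \cite[Appendix A]{CD}'') serving as the reference for both the definition of $\M$ and the lemma itself. So there is nothing to compare against --- you have supplied an argument where the paper simply cites one. Your anticipated use of the lemma (in the Zvonkin-transform step, feeding increments of the PDE solution into Krylov's estimate via \eqref{HH1}--\eqref{HH2}) is exactly how it is deployed in the proof of Lemma~\ref{PU}.
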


\begin{lem} \label{PU} Assume {\bf(A)}. Let $X$ and $Y$ be two solutions to \eqref{E10} with $X_0=Y_0$, then $\mathbb{P}$-a.s. $X=Y$.
\end{lem}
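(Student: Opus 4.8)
The plan follows the two-step scheme announced in the introduction: first show $\L_{X_t}=\L_{Y_t}$ for all $t\in[0,T]$, and then deduce $X=Y$ from the pathwise uniqueness of the resulting common \emph{frozen} functional SDE, which is available from \cite[Theorem 1.4]{B} exactly as used in Section~4.1. Write $b^\mu(t,\cdot):=b(t,\cdot,\mu_t)$, $\si^\mu(t,\cdot):=\si(t,\cdot,\mu_t)$ and $B^\mu(t,\cdot):=B(t,\cdot,\mu_t)$ for $\mu_\cdot\in C([0,T];\scr P_\theta)$. By {\bf(A1)}--{\bf(A3)} the triple $(B^\mu,b^\mu,\si^\mu)$ satisfies the hypotheses of Theorem~\ref{T1.1}(2), so the frozen SDE
\[
\d X^\mu(t)=\big\{B^\mu(t,X^\mu_t)+b^\mu(t,X^\mu(t))\big\}\,\d t+\si^\mu(t,X^\mu(t))\,\d W(t)
\]
has a unique strong solution $X^\mu$ with $\L_{X^\mu_0}=\mu_0$; set $\Phi(\mu)_t:=\L_{X^\mu_t}$. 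An a priori Krylov-type moment bound together with the time-regularity estimate, exactly as in the proof of Theorem~\ref{T1.1}(1), shows $\Phi$ maps $C([0,T];\scr P_\theta)$ into itself and that $\L_{X_\cdot},\L_{Y_\cdot}\in C([0,T];\scr P_\theta)$ (this last being built into the notion of solution considered here). Since $X$ solves the frozen SDE with frozen parameter $\mu:=\L_{X_\cdot}$, the weak uniqueness implied by its strong uniqueness (Yamada--Watanabe, cf.\ Lemma~\ref{SS}) forces $\L_{X_t}=\L_{X^\mu_t}=\Phi(\mu)_t$, i.e.\ $\L_{X_\cdot}$ is a fixed point of $\Phi$; likewise for $Y$. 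Hence it suffices to show $\Phi$ has at most one fixed point.

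Everything thus reduces to proving that there is $C>0$ with
\[
\W_\theta(\Phi(\mu)_t,\Phi(\nu)_t)^\theta\le C\int_0^t \W_\theta(\mu_s,\nu_s)^\theta\,\d s,\qquad t\in[0,T],\ \ \mu_\cdot,\nu_\cdot\in C([0,T];\scr P_\theta).
\]
Indeed, iterating this Volterra inequality gives $\W_\theta(\Phi^n(\mu)_t,\Phi^n(\nu)_t)^\theta\le\ff{(Ct)^n}{n!}\sup_{s\le T}\W_\theta(\mu_s,\nu_s)^\theta\to0$ as $n\to\8$, so $\Phi$ has a unique fixed point; then $\L_{X_\cdot}=\L_{Y_\cdot}$, $X$ and $Y$ solve the \emph{same} frozen SDE, and \cite[Theorem 1.4]{B} gives $X=Y$ $\P$-a.s.

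To prove the displayed estimate, realize $X^\mu,X^\nu$ on one probability space with the same $W$ and $X^\mu_0=X^\nu_0$ (legitimate since $\mu_0=\nu_0$). Apply Zvonkin's transform attached to the $\mu$-frozen generator $L^\mu_t:=\ff12\mathrm{tr}\big(\si^\mu(\si^\mu)^\ast\nn^2\big)+\<b^\mu,\nn\>$: for $\ll$ large let $u=u^\mu$ solve $(\pp_t+L^\mu_t-\ll)u=-b^\mu$ on $[0,T]$ with $u(T,\cdot)=0$. By the $L^q_p$-parabolic regularity underlying \cite[Theorem 1.4]{B} (see also \cite{XZ,HW18}), using $|b^\mu|^2,\|\nn\si^\mu\|^2\in L^q_p(T)$, for $\ll$ large one gets $\|u\|_\8+\|\nn u\|_\8\le\ff12$, $\nn u$ continuous, and $\nn^2u\in L^{2q}_{2p}(T)$, so $\Theta_t:=\mathrm{id}+u(t,\cdot)$ is a bi-Lipschitz homeomorphism of $\R^d$, uniformly in $t$. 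Put $Z_t:=\Theta_t(X^\mu(t))-\Theta_t(X^\nu(t))$, $Z_0=0$. By the It\^o--Krylov formula, the transform removes the singular drift from the $X^\mu$-dynamics; the error introduced by applying $\Theta$ to $X^\nu$ (which solves the $\nu$-frozen SDE) has modulus $\le C\W_\theta(\mu_t,\nu_t)\big(1+|\nn^2u(t,X^\nu(t))|\big)$, by {\bf(A1)} applied to $\|\si^\mu-\si^\nu\|+|b^\mu-b^\nu|$ at $(t,X^\nu(t))$. Every coefficient increment between $X^\mu(t)$ and $X^\nu(t)$ is then controlled by: (a) Lipschitzianity of $\ll u$; (b) the path-Lipschitz bound {\bf(A3)} on $B$ together with $\W_\theta(\mu_t,\nu_t)$; (c) Lemma~\ref{Hardy}, which bounds $|\nn u(t,X^\mu(t))-\nn u(t,X^\nu(t))|$ and $\|\si^\mu(t,X^\mu(t))-\si^\mu(t,X^\nu(t))\|$ by $C|X^\mu(t)-X^\nu(t)|$ times $\M|\nn^2u|+\M|\nn\si^\mu|$ evaluated at $X^\mu(t)$ and $X^\nu(t)$, which by \eqref{HH2} again lie in $L^{2q}_{2p}(T)$. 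Setting $h_s:=1+\big(\M|\nn^2u|+\M|\nn\si^\mu|\big)^2(s,X^\mu(s))+\big(\M|\nn^2u|+\M|\nn\si^\mu|\big)^2(s,X^\nu(s))\in L^q_p(T)$, It\^o's formula for $|Z_t|^2$ (for $\theta\neq2$ use $(\vv+|Z_t|^2)^{\theta/2}$ and let $\vv\downarrow0$), the Burkholder--Davis--Gundy and Young inequalities give
\[
\E\sup_{s\le t}|Z_s|^2\le C\int_0^t\E\big(h_s|Z_s|^2\big)\,\d s+C\int_0^t\W_\theta(\mu_s,\nu_s)^2\,\d s.
\]
Since $h\in L^q_p(T)$ with $(p,q)\in\scr K$, the Khasminskii-type estimates \eqref{APP3}--\eqref{KR2} together with a stochastic Gronwall argument (as in \cite{XZ}) absorb the first term, so $\E\sup_{s\le t}|Z_s|^2\le C\int_0^t\W_\theta(\mu_s,\nu_s)^2\,\d s$; since $\W_\theta(\Phi(\mu)_t,\Phi(\nu)_t)^\theta\le\E\|X^\mu_t-X^\nu_t\|_\C^\theta\le C\,\E\sup_{s\le t}|Z_s|^\theta$ by bi-Lipschitzianity of $\Theta$ and $X^\mu_0=X^\nu_0$, carrying the power $\theta$ through the same argument yields the claim.

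The main obstacle is to run Zvonkin's transform and the singular-SDE Gronwall machinery (Lemma~\ref{Hardy}, the maximal-function bound \eqref{HH2}, and the Khasminskii absorption of $\int h_s|Z_s|^2\,\d s$) \emph{simultaneously} with the McKean--Vlasov/path dependence, which forces $X^\mu$ and $X^\nu$ to solve different frozen equations and hence produces the error term with the unbounded factor $|\nn^2u(t,X^\nu(t))|$ multiplied by $\W_\theta(\mu_t,\nu_t)$. Verifying that this factor is Krylov-integrable is precisely where the choice of $(p,q)\in\scr K$ matters: from $|b^\mu|^2\in L^q_p$ one gets $\nn^2u\in L^{2q}_{2p}$, hence $|\nn^2u|^2,(\M|\nn^2u|)^2\in L^q_p$ with $(p,q)\in\scr K$, so \eqref{APP'} applies. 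Matching the exponents $\theta$ versus $2$ in the It\^o step and checking that the regularity of $u^\mu$ is locally uniform in $\mu_\cdot$ are the remaining technical points — and separating off the law-identification step (rather than arguing pathwise uniqueness of \eqref{E10} directly) is what keeps these tractable, since there $\W_\theta(\mu_s,\nu_s)$ is an external datum and the Gronwall loop does not feed back through the laws of $X^\mu,X^\nu$.
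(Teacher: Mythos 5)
Your outline is correct and rests on the same analytic engine as the paper --- Zvonkin's transform built from the $\mu$-frozen PDE, the maximal-function estimates of Lemma \ref{Hardy} and \eqref{HH2}, Krylov/Khasminskii integrability of $\nn^2\mathbf u$ and $\nn\si^\mu$, and a stochastic Gr\"onwall absorption --- but you organize it differently. You introduce the map $\Phi:\mu_\cdot\mapsto \L_{X^\mu_\cdot}$, prove a Volterra inequality for $\W_\theta(\Phi(\mu)_t,\Phi(\nu)_t)$ with the \emph{external} data $\mu,\nu$ on the right, and conclude by fixed-point uniqueness followed by pathwise uniqueness of the common frozen equation from \cite[Theorem 1.4]{B}. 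The paper instead works directly with the two given solutions $X,Y$: it applies the single transform $\theta^{\ll,\mu}$ (with $\mu=\L_{X_\cdot}$) to both, and closes the Gr\"onwall loop by observing that $(X_t,Y_t)$ is itself a coupling of $(\mu_t,\nu_t)$, so that $\W_\theta(\mu_t,\nu_t)^{2m}\le c(\E\sup_s|\xi_s|^\theta)^{2m/\theta}$ feeds the estimate back into itself (display \eqref{NN2}); once $\W_\theta(\mu_t,\nu_t)\equiv 0$ the same display yields $\xi\equiv 0$, so no external pathwise-uniqueness theorem is invoked at the end. Your route buys a cleaner logical separation (the Gr\"onwall loop never passes through the laws of $X^\mu,X^\nu$), at the price of needing the frozen well-posedness and the PDE regularity of $u^\mu$ for arbitrary $\mu_\cdot$, whereas the paper only ever needs the one transform attached to $\L_{X_\cdot}$.

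One step is stated more strongly than the tools allow, and it is exactly the point you defer as ``matching the exponents $\theta$ versus $2$.'' The stochastic Gr\"onwall lemma \cite[Lemma A.5]{B} applied to $\d(\e^{-A_t}|Z_t|^2)\le\dots$ with an \emph{unbounded} $\d A_t/\d t=h_t$ does not return $\E\sup_{s\le t}|Z_s|^2\le C\int_0^t\W_\theta(\mu_s,\nu_s)^2\,\d s$; it returns a bound on $\bigl(\E\sup_s(\e^{-A_s}|Z_s|^2)^{p}\bigr)^{1/p}$ for some $p\in(0,1)$, which must then be combined with $\E\,\e^{cA_T}<\infty$ via H\"older to recover a genuine moment. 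This is why the paper works with $|\xi_t|^{2m}$ for $2m>\theta$ and chooses $p\in(0,1)$ with $2mp>\theta$ in \eqref{NN2}: the $\theta$-moment needed to dominate $\W_\theta$ is strictly below the exponent $2mp$ produced by the lemma. Your argument goes through once you replace the asserted $L^2$ conclusion by this $2m$/$p$ bookkeeping; as written, the clean $L^2$ inequality you quote is not available.
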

\begin{proof} Let $\mu_t=\L_{X_t}, \nu_t=\L_{Y_t}, t\in [0,T].$ Then $\mu_0=\nu_0$.
Let $$b^\mu(t,x)= b(t,x, \mu_t),\ \ \ \si^\mu(t,x)= \si(t,x,\mu_t),\ \ (t,x)\in [0,T]\times\R^d,$$ and define  $b^\nu, \si^\nu$ in the same way using $\nu_t$ replacing $\mu_t$. Then
\beq\label{E110'}\beg{split}
&\d X(t)= b^\mu(t,X(t))\,\d t+ B(t,X_t,\mu_t)\,\d t+\sigma^\mu(t,X(t))\,\d W(t),\\
&\d Y(t)= b^\nu(t,Y(t))\,\d t+ B(t,Y_t,\nu_t)\,\d t+\sigma^\nu(t,Y(t))\,\d W(t).\end{split}
\end{equation}
For any $\lambda>0$, consider the following PDE for $u: [0,T]\times\R^d\to \R^d$:
\beq\label{PDE}
\frac{\partial u(t,\cdot)}{\partial t}+\frac{1}{2}\mathrm{Tr} (\sigma^\mu(\sigma^\mu)^\ast\nabla^2u)(t,\cdot)+(\nabla_{b^\mu}u)(t,\cdot)+b^\mu(t,\cdot)=\lambda u(t,\cdot),\ \ u(T,\cdot)=0.
\end{equation} \ By \cite[Theorem 5.1]{Z2},
when $\ll$ is large enough,   \eqref{PDE} has a unique solution $\mathbf{u}^{\lambda,\mu}$ satisfying
\begin{align}\label{u0}
\|\nabla \mathbf{u}^{\lambda,\mu}\|_{\infty}\leq \frac{1}{5},
\end{align}
and \beq\label{u01} \|\nabla^2 \mathbf{u}^{\lambda,\mu}\|_{L^{2q}_{2p}(T)}<\infty.\end{equation}
Let $\theta^{\lambda,\mu}(t,x)=x+\mathbf{u}^{\lambda,\mu}(t,x)$. By \eqref{E10}, \eqref{PDE},  and using the It\^o formula and an approximation technique (see \cite[Lemma 4.3]{Z2} for more details), we derive
\beq\label{E-X}\begin{split}
\d \theta^{\lambda,\mu}(t,X(t))&= \lambda \mathbf{u}^{\lambda,\mu}(t,X(t))\d t+\nabla\theta^{\lambda,\mu}(t,X(t)) B(t,X_t,\mu_t)\d t\\
&\qquad +(\nabla\theta^{\lambda,\mu}\sigma^\mu)(t,X(t))\,\d W(t),
\end{split}\end{equation}
and
\beq\begin{split}\label{E-Y}
\d \theta^{\lambda,\mu}(t,Y(t))&=\lambda \mathbf{u}^{\lambda,\mu}(t,Y(t))\d t+(\nabla\theta^{\lambda,\mu}\sigma^\nu)(t,Y(t))\,\d W(t)\\
&+\nabla\theta^{\lambda,\mu}(t,Y(t)) B(t,Y_t,\nu_t)\d t\\
&+[\nabla\theta^{\lambda,\mu}(b^\nu-b^\mu)](t,Y(t))\d t\\
&+\frac{1}{2}\mathrm{Tr} [(\sigma^\nu(\sigma^\nu)^\ast-\sigma^\mu(\sigma^\mu)^\ast)\nabla^2\mathbf{u}^{\lambda,\mu}](t,Y(t))\d t.
\end{split}\end{equation}
  Let $\xi_t=\theta^{\lambda,\mu}(t,X(t))-\theta^{\lambda,\mu}(t,Y(t))$. By \eqref{E-X}, \eqref{E-Y} and It\^o formula, we obtain
\begin{equation*}\begin{split}
\d|\xi_t|^2
=&2\lambda\left<\xi_t,\mathbf{u}^{\lambda,\mu}(t,X(t))-\mathbf{u}^{\lambda,\mu}(t,Y(t))\right\>\d t\\
&+2\left<\xi_t,\nabla\theta^{\lambda,\mu}(t,X(t)) B(t,X_t,\mu_t)-\nabla\theta^{\lambda,\mu}(t,Y(t)) B(t,Y_t,\nu_t)\right\>\d t\\
&+2\left\<\xi_t,[(\nabla\theta^{\lambda,\mu}\sigma^\mu)(t,X(t))-(\nabla\theta^{\lambda,\mu}\sigma^\nu)(t,Y(t))]\d W(t)\right\>\\
&+\left\|(\nabla\theta^{\lambda,\mu}\sigma^\mu)(t,X(t))-(\nabla\theta^{\lambda,\mu}\sigma^\nu)(t,Y(t))\right\|^2_{HS}\,\d t\\
&-2\left\<\xi_t, \nabla\theta^{\lambda,\mu}(b^\nu-b^\mu)](t,Y(t))\right\>\d t\\
&-\left\<\xi_t,\mathrm{Tr} [(\sigma^\nu(\sigma^\nu)^\ast-\sigma^\mu(\sigma^\mu)^\ast)\nabla^2\mathbf{u}^{\lambda,\mu}](t,Y(t))\right\>\d t.
\end{split}\end{equation*}
So, for any $m\ge 1$,
\begin{equation*}\beg{split}
\d|\xi_t|^{2m}
 =\, &2m\lambda|\xi_t|^{2(m-1)}\left<\xi_t,\mathbf{u}^{\lambda,\mu}(t,X(t))-\mathbf{u}^{\lambda,\mu}(t,Y(t))\right\>\d t\\
 &+2m|\xi_t|^{2(m-1)}\left<\xi_t,\nabla\theta^{\lambda,\mu}(t,X(t)) B(t,X_t,\mu_t)-\nabla\theta^{\lambda,\mu}(t,Y(t)) B(t,Y_t,\nu_t)\right\>\d t\\
&+2m|\xi_t|^{2(m-1)}\left\<\xi_t,[(\nabla\theta^{\lambda,\mu}\sigma^\mu)(t,X(t))-(\nabla\theta^{\lambda,\mu}\sigma^\nu)(t,Y(t))]\d W(t)\right\>\\
&+m|\xi_t|^{2(m-1)}\left\|(\nabla\theta^{\lambda,\mu}\sigma^\mu)(t,X(t))-(\nabla\theta^{\lambda,\mu}\sigma^\nu)(t,Y(t))\right\|^2_{HS}\,\d t\\
&+2m(m-1) |\xi_t|^{2(m-2)}\left|[(\nabla\theta^{\lambda,\mu}\sigma^\mu)(t,X(t))-(\nabla\theta^{\lambda,\mu}\sigma^\nu)(t,Y(t))]^\ast\xi_t \right|^2\d t\\
&-2m|\xi_t|^{2(m-1)}\left\<\xi_t, \nabla\theta^{\lambda,\mu}(b^\nu-b^\mu)](t,Y(t))\right\>\d t\\
&-m|\xi_t|^{2(m-1)}\left\<\xi_t,\mathrm{Tr} [(\sigma^\nu(\sigma^\nu)^\ast-\sigma^\mu(\sigma^\mu)^\ast)\nabla^2\mathbf{u}^{\lambda,\mu}](t,Y(t))\right\>\d t.\end{split}\end{equation*}
Firstly,
\beq\label{XPP0}\begin{split}
&|\xi_t|^{2(m-1)}\left<\xi_t,\nabla\theta^{\lambda,\mu}(t,X(t)) B(t,X_t,\mu_t)-\nabla\theta^{\lambda,\mu}(t,Y(t)) B(t,Y_t,\nu_t)\right\>\d t\\
&\le c_0 |\xi_t|^{2m}\scr M\big(\|\nn^2\theta^{\ll,\mu}\|(t, X(t))+\|\nn^2\theta^{\ll,\mu}\|(t, Y(t))\big)\\
&+c_0 \sup_{s\in[0,t]}|\xi_s|^{2m}+c_0\W_\theta(\mu_t,\nu_t)^{2m}.
\end{split}\end{equation}
According to \cite{HW18} and  \eqref{XPP0}, we arrive at
\beq\label{NNP}\d |\xi_t|^{2m} \le c_2 \sup_{s\in[0,t]}|\xi_s|^{2m} \d t +c_2 |\xi_t|^{2m} \d A_t+c_2 \W_\theta(\mu_t,\nu_t)^{2m}\d t + \d M_t\end{equation}
for some constant $c_2>0$, a local martingale $M_t$,  and
\begin{align*}
A_t:=c\int_0^t\Big\{&1+ |\nn^2{\mathbf u}^{\ll,\mu}(s,Y(s))|^2 +\big(\scr M\big(\|\nn^2\theta^{\ll,\mu}\|+\|\nn\si^\mu\|\big)(s,X(s))  \\
&+  \scr M\big(\|\nn^2\theta^{\ll,\mu}\|+\|\nn\si^\mu\|\big)(s,Y(s))\big)^2\Big\}\d s.
\end{align*}
By It\^{o}'s formula, we have
\begin{equation*}\d \e^{-A_t}|\xi_t|^{2m} \le c_2 \e^{-A_t}\sup_{s\in[0,t]}|\xi_s|^{2m} \d t +c_2 \e^{-A_t} \W_\theta(\mu_t,\nu_t)^{2m}\d t + \e^{-A_t}\d M_t\end{equation*}
When $2m>\theta$, we can take $p\in(0,1)$ such that $2mp>\theta$. By the stochastic Gr\"{o}nwall lemma due to \cite[Lemma A.5]{B}, we arrive at
 \beq\label{NN2}\begin{split}\W_\theta(\mu_t,\nu_t)^{2m}&\leq c_3\left(\E \sup_{s\in[0,t]}|\xi_s|^\theta\right)^{\frac{2m}{\theta}}\leq c_3\left(\E\e^{\frac{\theta}{2m}A_t} \sup_{s\in[0,t]}\e^{-\frac{\theta}{2m}A_s}|\xi_s|^\theta\right)^{\frac{2m}{\theta}}\\
 &\le c_3 \left(\E\e^{\ff{2mp}{2mp-\theta}\frac{\theta}{2m}A_t}\right)^{\ff{2mp-\theta}{p\theta}} \left(\E \left(\sup_{s\in[0,t]}\e^{-\frac{\theta}{2m}A_s}|\xi_s|^\theta\right)^{\frac{2mp}{\theta}}\right)^{\ff{1}{p}}\\
 &=c_3 \left(\E\e^{\ff{\theta p}{2mp-\theta}A_t}\right)^{\ff{2mp-\theta}{p\theta}}  \left(\E \left(\sup_{s\in[0,t]}\e^{-A_s}|\xi_s|^{2m}\right)^{p}\right)^{\ff{1}{p}}\\
  &\leq c_4 \left(\E\e^{\ff{\theta p}{2mp-\theta}A_t}\right)^{\ff{2mp-\theta}{p\theta}}  \int_0^t\W_\theta(\mu_s,\nu_s)^{2m}\d s,\ \ t\in [0,T]\end{split}\end{equation}
for some constants $c_3, c_4>0$.
Since by Lemma \ref{KK}, \eqref{HH2}, \eqref{u01} and the Khasminskii type estimate, see for instance \cite[Lemma 3.5]{XZ}, we have
$$\E\e^{\ff{\theta p}{2mp-\theta}A_T}<\infty,$$
so that by the Gr\"{o}nwall lemma we prove $\W_\theta(\mu_t,\nu_t)=0$ for all $t\in [0,T].$ Then by \eqref{NN2}, we conclude $\xi_t=0$, which implies $X_t=Y_t$ for all $t\in [0,T].$
\end{proof}
\subsection{Proof of Theorem \ref{T1.1}(3)}
Under the assumption of Theorem \ref{T1.1}(3), applying Lemma \ref{PU}, we get the uniqueness of strong solution of \eqref{E10}. This and Theorem \ref{T1.1}(2) imply \eqref{E10} has a unique strong solution.
 \section{Proofs of Theorems \ref{T3.1}-\ref{T5.1}}

 \subsection{Proof of Theorem \ref{T3.1}}
By \cite[Theorem 1.1]{HZ}  with  $\mathbb{H}=\mathbb{R}^d$,  we know that \eqref{class} has a unique strong solution $X_t$ up to life time. Combining this with Themrem \ref{T1.1}, Lemma \ref{SS},  we see that the SDE \eqref{E10} has strong   existence and uniqueness under {\bf(H)}.  For any $\mu\in \scr P_2$ we let $\mu_t=P_t^*\mu$ be the distribution of $X_t$
which solves \eqref{E11} with $\L_{X_0}=\mu.$
\beg{proof}[Proof of Theorem \ref{T3.1}] 
For $\mu_t:= P_t^*\mu_0$ and $\nu_t:=P_t^*\nu_0$, we may
rewrite \eqref{E11} as
\begin{equation}\label{barX}
\d X(t)= \bar{b}(t,X(t))\d t +\bar{B}(t,X_t)\d t+\si(t,X(t))\d \bar{W}(t),\ \ \scr L_{X_0}=\mu_0,
\end{equation}
where
\begin{equation*}\begin{split}
&\bar{b}(t,x):=b(t,x,\nu_t),\ \ \bar{B}(t,\xi):=B(t,\xi,\nu_t),\ \ \d \bar{W}(t):=\d W(t)+ \bar{\gamma}(t)\d t,\\
& \bar{\gamma}(t):=[\si^\ast(\si\si^\ast)^{-1}](t,X(t))[b(t,X(t),\mu_t)- b(t,X(t),\nu_t)+B(t,X_t,\mu_t)- B(t,X_t,\nu_t)].
\end{split}\end{equation*}
By assumption {\bf (H)}, using $$\d A_t\leq C\d t$$
in the proof of Lemma \ref{PU}, we have
\begin{equation}\begin{split}\label{EbarG}
|\bar{\gamma}(t)|\leq C\W_2(\mu_t,\nu_t)\le K(t) \W_2(\mu_0,\nu_0), \ \ t\in[0,T]
\end{split}\end{equation} for some increasing function $K:\R_+\to\R_+.$
Let
\begin{equation}\label{EB2}
\bar{R}_t=\exp\left\{-\int_0^t\langle\bar{\gamma}(s),\d W(s)\rangle-\frac{1}{2}\int_0^t|\bar{\gamma}(s)|^2\d s\right\},\ \ t\in[0,T].
 \end{equation}
  By Girsanov's theorem, $\{\bar{W}(t)\}_{t\in[0,T]}$ is a $d$-dimensional Brownian motion under the probability measure $\bar{\P}_T:= \bar R_T\P$.

Next, according to the proof of \cite[Lemma 3.2]{HW}, we can construct an adapted process $\tt\gg(t)$ on $\R^d$ such that
\beg{enumerate} \item[(a)] Under the probability measure $\bar \P_T$,
$$\tilde{R}_t:=\exp\left\{-\int_0^t\langle\tilde{\gamma}(s),\d \bar{W}(s)\rangle-\frac{1}{2}\int_0^t|\tilde{\gamma}(s)|^2\d s\right\}, \ \ t\in[0,T]$$
is a martingale, such that $\tt\P_T:= \tt R_T\bar\P_T=\tt R_T\bar R_T\P$ is a probability measure under which
$$\tt W(t):=  \bar{W}(t)+ \int_0^t \tilde{\gamma}(s)\d s =  W(t)+ \int_0^t \big(\bar{\gamma}(s)+\tilde{\gamma}(s)\big)\d s,\ \ t\in [0,T]$$ is a $d$-dimensional Brownian motion.
\item[(b)] Letting $Y(t)$ solve the following stochastic functional differential equation under the probability measure $\tt\P_T$ with the given initial value $Y_0$:
\begin{equation*}\begin{split}
\d Y(t) &= \bar{b}(t,Y(t))\d t+ \bar{B}(t,Y_t)\d t+\si(t,Y(t)) \d \tilde{W}(t),
\end{split}\end{equation*}  we have  $\L_{Y_0|\tt\P}=\L_{Y_0}=\nu_0$ and  $X_T=Y_T\ \tt\P_T$-a.s..
\item[(c)] There exists  $C\in C(\R_+;\R_+)$ such that
$$ \E_{\tt\P_T} \int_0^T |\tt\gg(s)|^2\d s \le C(T) \E\Big(\ff{|X(0)-Y(0)|^2}{T-r}+\|X_0-Y_0\|_{\C}^2\Big).$$
    \end{enumerate}

By the definition of $\bar b$ we see that $(Y_t,\tt W(t))$ is a weak solution to the equation \eqref{barX} with initial distribution $\nu_0$, so that by the weak uniqueness,
$\scr L_{Y_t}|_{\tt \P_T}=\nu_t, t\in [0,T].$ Combining this with (b) we obtain
$$(P_Tf)(\nu_0)= \E_{\tt \P_T} [f(Y_T)] = \E_{\tt P_T}[f(X_T)] = \E [\bar R_T\tt R_T f(X_T)],\ \ f\in \B_b^+(\C).$$
Letting $R_T= \bar R_T\tt R_T$,   by Young's inequality and H\"older's inequality respectively, we obtain
\beq\label{LHI}  (P_T\log f)(\nu_0) \le \E [  R_T \log   R_T ]+ \log\E[f(X_T)]
 = \E [  R_T \log   R_T ]+ \log(P_T f)(\mu_0),
 \end{equation}
and
\beq\label{HI}\begin{split}
&(P_T f(\nu_0))^{p}\le (\E R_T^{\ff {p}{p-1}})^{p-1} (\E f^{p}(X_T))= (\E R_T^{\ff {p}{p-1}})^{p-1} P_T  f^{p}(\mu_0),\ \ p>1.
\end{split}\end{equation}
 We are now ready to prove the Harnack inequality.

 By \eqref{EbarG} ,  (c)   and since $\W_2(\mu_0,\nu_0)^2\le \E\|X_0-Y_0\|_{\C}^2$,
\begin{equation*}\beg{split}
\E[R_T\log R_T]&\le \frac{1}{2}\E_{\tilde{\P}_T}\int_0^T|\bar{\gamma}(s)+\tilde{\gamma}(s)|^2\d s\\
&\le \E_{\tilde{\P}_T}\int_0^T|\tilde{\gamma}(s)|^2\d s+ \int_0^T|\bar{\gamma}(s)|^2\d s\\
&\le \E_{\tilde{\P}_T}\int_0^T|\tilde{\gamma}(s)|^2\d s+\int_0^TC(t)\W_2(\mu_t,\nu_t)^2\d t\\
&\le H_1(T) \mathbb{E}\bigg(\ff{|X(0)-Y(0)|^2}{T-r}+ \|X_0-Y_0\|_{\C}^2\bigg), \ \ T>r\end{split}\end{equation*}
holds for some $H_1\in C(\R_+;\R_+).$
Combining this with \eqref{LHI} we obtain the Log-Harnack inequality.

 Finally, according to the proof of \cite[Theorem 4.1]{HRW}, there exists $p_0>1$ such that for any $p>p_0$,
$$(\E_{\bar\P_T} \tt R_T^{\ff {p}{p-1}})^{\frac{p-1}{p}} \le   \E  \Big(\e^{C(p,T)\big(1+\ff{|X(0-Y(0)|^2}{T-r} +\|X_0-Y_0\|_{\C}^2\big)}\Big),\ \ T>r.  $$
By applying this estimate for $p_1:=\ff 1 2(p+(p_0)$ and combining with $R_T=\tt R_T \bar R_T$, \eqref{EbarG}, \eqref{EB2} and
$\W_2(\mu_0,\nu_0)^2\le \E\|X_0-Y_0\|_{\C}^2,$ we arrive at
\beg{align*} \Big(\E  R_T^{\ff {p}{p-1}}\Big)^{\frac{p-1}{p}} &=\Big(\E_{\bar\P_T}   \tt R_T^{\ff {p}{p-1}} \bar R_T^{\ff 1 {p-1}}\Big)^{\frac{p-1}{p}}
 \le  \Big(\E_{\bar \P_T} \tt R_T^{\ff {p_1}{p_1-1}}\Big)^{\frac{p_1-1}{p_1}} \Big(\E_{\bar\P_T} \bar R_T^{\ff {p_1}{p-p_1}}\Big)^{\frac{p-p_1}{pp_1}}\\
&\le  \E  \Big(\e^{C(p_1,T)\big(1+\ff{|X(0-Y(0)|^2}{T-r} +\|X_0-Y_0\|_{\C}^2\big)}\Big) \Big(\E\bar R_T^{\ff {p}{p-p_1}}\Big)^{\frac{p-p_1}{pp_1}}\\
&\le \E \Big(\e^{H_2(p,T)\big(1+\ff{|X(0-Y(0)|^2}{T-r} +\|X_0-Y_0\|_{\C}^2\big)}\Big),\ \ T>r\end{align*} for some $H_2\in C(\R_+;\R_+).$ Therefore, \eqref{H2'} follows from \eqref{HI}.
\end{proof}
\subsection{Proof of Theorem \ref{T5.1}}
\begin{proof}[Proof of Theorem \ref{T5.1}] Again by the semigroup property and Jensen's inequality, we only need to consider $T-r\in (0,1]$. Define
 $$\gamma(s):= \beg{cases} \ff{s^+}{T-r} \eta(-r),\ \  &\text{if}\  s\in[-r,T-r],\\
\eta(s-T), &\text{if}\
s\in(T-r,T].\end{cases} $$
Next, we construct couplings.
For fixed $\mu_0\in\scr P_2$, let $X(t)$ solve \eqref{E5} with $\L _{X_0}= \mu_0$; and let $\bar{X}(t)$ solve the equation
\beq\label{EC1}
\d \bar{X}(t)= \{b(t,X(t),\mu_t)+B(t,X_t,\mu_t)\}\d t +\si(t,\mu_t) \d W(t)+ \gamma'(t)\d t\end{equation} with
$\bar{X}_0= X_0$. Then the solution to (\ref{EC1}) is non-explosive as well. Moreover,
\begin{equation*} \bar{X}(s)=X(s)+\gamma(s), \ \
s\in[-r,T].\end{equation*}
In particular, $$\bar{X}_T=X_T+\eta.$$
By the definitions of $\gamma$,  there exists a constant
$C>0$ such that for any $s\in[0,T]$,
\beg{equation}\label{NN0'}\beg{split}
&|\gamma'(s)|\le C1_{[0,T-r]}(s)\ff {|\eta(-r)|}{T-r}+C1_{[T-r,T]}(s)|\eta'(s-T)|,\\
&|\gamma(s)|\le C|\eta(-r)|+C\|\eta\|_{\C}\leq C\|\eta\|_{\C},\ \ \|\gamma_s\|_{\C} \le C\|\eta\|_{\C}.\end{split}\end{equation}
Let
$$
\bar{\Phi}(s)=b(s,X(s),\mu_s)-b(s,\bar{X}(s),
\mu_s)+B(s,X_s,\mu_s)-B(s,\bar{X}_s,\mu_s)+\gamma'(s).
$$
From {\bf (H)} and \eqref{NN0'}, it holds
\begin{equation}\begin{split}\label{Phi'}
\int_0^T|\bar{\Phi}(s)|^2\d s&\leq C\int_0^T\left(\phi(|\gamma(s)|)+\|\gamma_s\|_{\infty}+|\gamma'(s)|\right)^2\d s\\
&\leq C\ff {|\eta(-r)|^2}{T-r}+C\int_{-r}^0|\eta'(s)|^2\d s+CT\phi^2\left(C\|\eta\|_{\C}\right)+CT\|\eta\|_{\C}^2.
\end{split}\end{equation}
Set
\begin{align*}
\bar{R}(s)=\exp\bigg[-\int_0^s\< \si^{-1}(u,\mu_u)\bar{\Phi}(u), \d
W(u)\>-\frac{1}{2}\int_0^s |\si^{-1}(u,\mu_u)\bar{\Phi}(u)|^2\d u\bigg],
\end{align*}
and
$$
\bar{W}(s)=W(s)+\int_0^s\si^{-1}(u,\mu_u)\bar{\Phi}(u)\d u.
$$
Girsanov's theorem implies that $\bar{W}$ is a Brownnian motion on $[0,T]$ under $\bar{\Q}_T=\bar{R}(T)\P$. Then (\ref{EC1}) reduces to
\begin{equation*}
\d \bar{X}(t)= \{b(t,\bar{X}(t),\mu_t)+B(t,\bar{X}_t,\mu_t)\}\d t +\si(t,\mu_t) \d \bar{W}(t).
\end{equation*}
Thus, the distribution of $\bar{X}_T$ under $\bar{\Q}_T$ coincides with that of $X_T$  under $\P$.

On the other hand, by Young's inequality,
\begin{align*}
P_T \log f(\mu_0)&=\E^{\bar{\Q}_T}\log f(\bar{X}_T)\\
&=\E ^{\bar{\Q}_T}\log f(X_T+\eta)\\
&\leq \log P_T f(\cdot+\eta)(\mu_0)+\E \bar{R}(T)\log \bar{R}(T),
\end{align*}
and by H\"{o}lder inequality,
\begin{align*}
P_T f(\mu_0)&=\E^{\bar{\Q}_T}f(\bar{X}_T)\\
&=\E ^{\bar{\Q}_T}f(X_T+\eta)\leq (P_T f^p(\cdot+\eta))^{\frac{1}{p}}(\mu_0)\{\E \bar{R}(T)^{\frac{p}{p-1}}\}^{\frac{p-1}{p}}.
\end{align*}
Since $\bar W$ is a Brownian motion under $\bar{\Q}_T$, by the definition of $\bar{R}(T)$, it is easy to see that
\begin{align*}
\E \bar{R}(T)\log \bar{R}(T)=\E ^{\bar{\Q}_T}\log \bar{R}(T)=\frac{1}{2}\E \int_0^T |\si^{-1}(u,\mu_u)\bar{\Phi}(u)|^2\d u\leq \beta(T,\eta,r),
\end{align*}
and
\begin{align*}
&\E \bar{R}(T)^{\frac{p}{p-1}}\\
&\leq \E\Bigg\{\exp\bigg[\frac{-p}{p-1}\int_0^T\< (\sigma^{-1}(u,\mu_u)\bar{\Phi}(u), \d
W(u)\>\\
&-\frac{1}{2}\frac{p^2}{(p-1)^2}\int_0^T |\sigma^{-1}(u,\mu_u)\bar{\Phi}(u)|^2\d u\bigg]\\
&\ \ \ \ \ \ \ \ \ \times\exp\bigg[\frac{1}{2}\frac{p^2}{(p-1)^2}\int_0^T |\sigma^{-1}(u,\mu_u)\bar{\Phi}(u)|^2\d u\\
&-\frac{1}{2}\frac{p}{p-1}\int_0^T |\sigma^{-1}(u,\mu_u)\bar{\Phi}(u)|^2\d u\bigg]\Bigg\}\\
&\leq\mathrm{ess}\sup_{\Omega}\exp\left\{\frac{p}{2(p-1)^2}\int_0^T |\sigma^{-1}(u,\mu_u)\bar{\Phi}(u)|^2\d u\right\}.
\end{align*}
Combining \eqref{Phi'}, the shift Harnack inequality holds.
 \end{proof}

\end{document}